\documentclass[english,12pt]{amsart}

\baselineskip 15pt
\hsize=12.3 cm
\vsize=18.5 cm

\usepackage{graphicx,epsfig,psfrag}
\usepackage{amssymb,amsfonts,amsmath,amsthm,dsfont,wasysym,pifont,stmaryrd}
\usepackage{epstopdf,yfonts}
\usepackage{hyperref}
\usepackage{psfrag}
\DeclareGraphicsRule{.tif}{png}{.png}{`convert #1 `dirname #1`/`basename #1 .tif`.png}
\input pstricks
\input pst-node

\usepackage{pstricks, pst-node}
\input xy
\usepackage[all]{xy}

\newcommand{\thismonth}{\ifcase\month % case 0 --- impossible!
  \or January\or February\or March\or April\or May\or June%
  \or July\or August\or September\or October\or November%
  \or December\fi}

\newcommand{\mcA}{\mathcal{A}}

\newcommand{\mcT}{\mathcal{T}}
\newcommand{\mcC}{\mathcal{C}}

\newcommand{\mcH}{\mathcal{H}}

\newcommand{\mcL}{\mathcal{L}}

\newcommand{\mcU}{\mathcal{U}}

\newcommand{\one}{\mathbf{1}}

\newcommand{\Ff}{\mathbb{F}}

\newcommand{\om}{\omega}

\newcommand{\Hind}{\mcH_{\textrm{ind}}}
\newcommand{\piind}{\pi_{\textrm{ind}}}

\DeclareMathOperator{\Aut}{Aut}

\newcommand{\inv}[1]{{#1}^{-1}}

\newcommand{\norm}[1]{\|{#1}\|}

\newcommand{\defn}[1]{\emph{#1}}
\newcommand{\ind}[1]{\operatorname{Ind}_{\Gamma_0}^\Gamma {(#1)}}
\newcommand{\st}{\,:\;} % 'such that' for set definitions
\newcommand{\period}{\; .}    % at the end of formulas
    % at the end of formulas

\title[Representations of Virtually Free Groups]{A New Family of
Representations \\ of Virtually Free Groups}

\author{Alessandra Iozzi}
\address{Departement Mathematik\\
ETH Z\"urich\\
8092 Z\"urich\\
SWITZERLAND }
\email{iozzi@math.ethz.ch}

\author{M. Gabriella Kuhn}
\address{Dipartimento di Matematica \
Universit\`a di Milano ``Bicocca''\\
Via Cozzi 53\\ 
20125 Milano, ITALIA}
\email{mariagabriella.kuhn@unimib.it}

\author{Tim Steger}
\address{Facolt\`a di Scienze Matematiche Fisiche e Naturali\\
Universit\`a degli Studi di Sassari\\
Via Piandanna 4\\
07100 Sassari, ITALIA}
\email{steger@uniss.it}

\subjclass{Primary; 22D10, 43A65. Secondary: 15A48, 22E45, 22E40}
\keywords{free group, Gromov hyperbolic group, irreducible unitary representation, boundary
realization, cross product, Herz majorization principle}

\date{\today}

\pagestyle{myheadings}

\newtheorem{theorem}{Theorem}[section]
\newtheorem{theorem_intro}{Theorem}
\newtheorem{corollary}[theorem]{Corollary}

\newtheorem{proposition}[theorem]{Proposition}

\theoremstyle{definition}
\newtheorem{definition}[theorem]{Definition}
\newtheorem{remark}[theorem]{Remark}

\newtheorem{example}[theorem]{Example}

\numberwithin{equation}{section}

\begin{document}

\parindent=1em\

\pagestyle{myheadings}

\begin{abstract} %Let $\Gamma$ be a nonuniform lattice in $PSL(2,{\mathbf R})$. It 
%is known that $\Gamma$ is virtually free. 
We construct a new family of irreducible unitary representations of a finitely generated
virtually free group $\Lambda$. We prove furthermore 
a general result concerning representations of Gromov hyperbolic groups 
that are weakly contained in the regular representation, thus implying 
that all the representations in this family can be realized on the boundary of $\Lambda$.
As a corollary, we obtain an analogue of Herz majorization principle.
\end{abstract}

\maketitle

\pagestyle{myheadings}

\numberwithin{equation}{section}

\section{Introduction}

Free groups are ubiquitous in mathematics and their representation
theory has been widely studied.  However, since a (finitely generated)
free group $\Gamma$ is not type I, the usual program of representation
theory in its na\"\i ve form, decomposing unitary representations into
irreducible ones, is almost meaningless.  In fact, to construct a
unitary representation of $\Gamma$ it is only necessary to fix a Hilbert
space $H$ and to choose a unitary operator for each
generator. % Intuition is that a
A ``random'' choice will yield an irreducible representation.

If we restrict our
attention to those representations that are weakly contained in the regular
representation the situation drastically changes. For brevity we shall say 
that a representation is {\em tempered} if it is weakly contained in the regular representation. 
Using the fact that the reduced $C^\ast$ algebra of $\Gamma$ is simple (\cite{Pow}),
one can prove \cite{K-S2} that a tempered representation $(\pi,H)$ can always be realized
as a {\em boundary representation} (see \S\ref{sec:boundary-rep} for the definition).
This implies in particular that the Hilbert space $H$ can be chosen
to be  a direct integral of a measurable field of Hilbert spaces
$H=\int_{\partial\Gamma}^\oplus H_xd\mu(x)$ over the 
boundary $\partial\Gamma$ of $\Gamma$ for a suitable quasi-invariant measure $\mu$
which depends on the representation.

In 2004, 
%in the paper ``Free group representations from vector-valued multiplicative functions I''
a large family of irreducible unitary tempered representations of the free group,
the so-called {\it multiplicative} representations, was introduced \cite{K-S3}.
Although these representations have a very concrete and seemingly elementary definition,
this family is large enough to include all known specific irreducible
tempered representations constructed using  the action of $\Gamma$ on its Cayley graph. 

In \cite{Iozzi_Kuhn_Steger_stab} 
we extended the class in \cite{K-S3} to include also representations 
that are obtained with a similar procedure as in \cite{K-S3}
but are  only {\it finitely reducible}.
This has the advantage that this enlarged class of representations,
called the class $\mathbf{Mult}(\Gamma)$, is now stable under many natural operations,
such as the restriction to a subgroup and the induction to a free supergroup
(see \cite{Iozzi_Kuhn_Steger_stab}).  
Moreover, %it is important to stress that, 
although the construction presented in \cite{K-S3}
seems to depend on the choice of generators, the class $\mathbf{Mult}(\Gamma)$  
is independent on that choice and in fact 
%the class $\mathbf{Mult}(\Gamma)$ 
it is invariant under the action of $\Aut(\Gamma)$.
This fact is not true for 
example for the restriction to the free group in two generators
of the spherical series of the group of automorphisms of the homogeneous tree of valency four.
(See Remark~\ref{rem:facts}(2) for more 
on the irreducibility of these representations.)

\medskip
In this paper, in analogy with the case of the free group, we define
a new class of representations for virtually free groups.
%Of course $\Ff$ is a ``good'' example of a hyperbolic group, but
%it is very far from being exhaustive: it represents a spot
%in the class of  hyperbolic group without property $(T)$.
%hyperbolic groups that are nonuniform lattices in $PSL(2,{\mathbf R})$,
%that is discrete groups such that the quotient space  $G/\Lambda$ admits a finite
%$G$-invariant measure.  It is then well known that $\Lambda$ is virtually free%, namely that there exists a
%free subgroup $\Ff\leq\Lambda$ of finite index.
%In the same spirit as for the free group, 
These groups include for example  $\mathrm{PSL}(2,\mathbb Z)\cong\mathbb Z_2\ast \mathbb Z_3$,
whose commutator subgroup  is a torsion-free surface group and whose abelianization
$\mathrm{PSL}(2,\mathbb Z)_\mathrm{ab}\cong\mathbb Z_2\times\mathbb Z_3$ has order six.  
Furthermore, virtually free groups are Gromov hyperbolic and can be realized 
as fundamental groups of finite graph of finite groups, \cite{Karrass_Pietrowski_Solitar}.

We define a class $\mathbf{Mult}(\Lambda)$ of unitary representations
of a finitely generated virtually free group $\Lambda$ by inducing a
representation of the class $\mathbf{Mult}(\Gamma)$ from a (in fact,
any) free subgroup $\Gamma$ of finite index (see \S~\ref{sec:lambda}) .   
For these classes of representations we prove the following

\begin{theorem_intro} Let $\Lambda$ be a virtually free group.  
\begin{enumerate}
\item The classes $\mathbf{Mult}(\Lambda)$ and $\mathbf{Mult}_{\mathrm irr}(\Lambda)$
are non-empty and $\Aut(\Lambda)$-invariant (Corollary~\ref{cor:invariance}).
\item The representations in the class $\mathbf{Mult}(\Lambda)$ are weakly contained in the 
regular representation (Corollary~\ref{cor:bdry-rep-Gamma}).
\item The class $\mathbf{Mult}(\Lambda)$ are cocycle representations of $\Lambda$,
that is representations of the cross product
$\Lambda\ltimes\mcC(\partial\Lambda)$ (Corollary~\ref{cor:bdry}). 
\end{enumerate}
\end{theorem_intro}

%These representations
%turn out to be finitely reducible, thus defining a non-empty class
%$\mathbf{Mult}_\mathrm{irr}(\Lambda)$ of irreducible representations of
%$\Lambda$, which is invariant under the action of the group of
%automorphisms $\Aut(\Lambda)$ (Corollary~\ref{cor:invariance}).  Furthermore, these unitary
%representations are weakly contained in the regular representations (Corollary~\ref{cor:bdry-rep-Gamma})
%and appear as boundary representations of $\Lambda$, that is as
%representations of the cross product
%$\Lambda\ltimes\mcC(\partial\Lambda)$ (Corollary~\ref{cor:bdry}).

As we mentioned earlier, the representations of the class
$\mathbf{Mult}(\Gamma)$ encompass all tempered representations of the
free group $\Gamma$ that arise from the embedding of $\Gamma$ into
the group of automorphisms of its Cayley graph (with respect to some
set of generators). On the other hand, to the authors' knowledge, we
are not aware of other realizations of any of the representations in
the class $\mathbf{Mult}_\mathrm{irr}(\Lambda)$ of a virtually free
group $\Lambda$.  Constructions similar to ours (in the cocompact case)
can be found for example in \cite{Bader_Muchnik}, where the authors
show the irreducibility of the quasi-regular representation of a
compact surface group $\pi_1(\Sigma)$ on the geodesic
boundary\footnote{One word of warning for the reader: what the authors
  in \cite{Bader_Muchnik} call "boundary representation" is not what
  is referred to with the same terminology in this paper, but what we call 
``quasi-regular'' representation in Theorem~\ref{prop:herz}.}
$\partial\Sigma$ with respect to the Patterson--Sullivan measure.
Likewise, in \cite{Burger_delaHarpe}, the authors show that if
$H<L$ are discrete groups such that
$H=\mathrm{Comm}_L(H)$, then the induction to $L$ of
any finite dimensional irreducible representation of $H$ remains
irreducible.  None of these result seem to have a nonempty
intersection with our construction.

\medskip
The last item in the above theorem follows from a result that was already known for free groups and
we record here for a general Gromov hyperbolic group (see Theorem~\ref{general}), namely:

\begin{theorem_intro}
Let $ G $ be a torsion free Gromov hyperbolic group which is not almost cyclic. 
Then every tempered representation of $ G $ is a cocycle representation with respect to 
some quasi-invariant measure.  

If the representation is irreducible, the measure 
can be taken to be ergodic.
\end{theorem_intro}

As a consequence of this result we prove the following analogue of Herz
majorization principle:

\begin{theorem_intro}
Let  $(\pi,H)$ be a tempered 
representation of  a torsion free Gromov hyperbolic group $ G $  
which is not almost cyclic  
and let $v$ be any vector in $H$. 
Then there exists a positive measure $\mu$ on $\partial G $ such that
\begin{equation*}
|\langle\pi(x)v,v\rangle|\leq \norm{v}^2|\langle\rho(x)\one_{\partial G },\one_{\partial G }\rangle|
\end{equation*}
where $\rho$ is the quasi-regular representation on $L^2(\partial G ,d\mu)$ and 
$\one_{\partial G }$ is the constant function on $\partial G $.
\end{theorem_intro}

% according to which, given a tempered
%representation of a Gromov hyperbolic group $G$, 
%there exists a quasi-invariant measure on
%$\partial G$ such that the matrix coefficients of the tempered
%representation are bounded by those of the quasi-regular
%representation on $\partial G$ (Theorem~\ref{prop:herz}).  
The measure on $\partial G$ must however depend on the tempered
representation, thus implying that a Harish-Chandra function cannot
exist (see Remark~\ref{rem:4.8}) and exhibiting one more instance of
the fact that Gromov hyperbolic group behave morally as rank one
groups.

\medskip
We remark that the above construction relies not only upon the
stability properties of the class $\mathbf{Mult}(\Gamma)$ of a free
group $\Gamma$ (which were proven in \cite{Iozzi_Kuhn_Steger_stab}),
but also of the non-obvious corresponding properties of the extension
of multiplicative representations to boundary representations (see for
example Theorem~\ref{thm:ind}).

%Since a finitely generated virtually free group can be realized as the fundamental group of a finite graph of finite groups 
%(with finite groups as edges and vertices)  \cite{Bass}, any such virtually free group can be embedded in the 
%automorphism group of a tree( Karrass-Pietrowski-Solitar, relying on Stalling)

%Furthermore, it follows from the stability results for the class $\Mult{r}$, that it is invariant under the action 
%of $\Aut(\mathbb F_r)$.  At the moment we are not able to make an analogous statement for the representations
%in the class $\mathbf{Mult}(\Lambda)$, unless $\Lambda$ contains a free subgroup of finite index that is
%characteristic, in which case the class $ \mathbf{Mult}(\Lambda)$  is $\Aut(\Lambda)$-invariant.
%This is the case for example of $PSL(2,\mathbb Z)\cong\mathbb Z_2\ast \mathbb Z_3$,
%whose commutator subgroup  is a torsion-free surface group and whose abelianization
%$PSL(2,\mathbb Z)_\mathrm{ab}\cong\mathbb Z_2\times\mathbb Z_3$ has order six.  
%It would be interesting to determine what is the largest subgroup of $\Aut(\Lambda)$ 
%which leaves invariant the class $\mathbf{Mult}(\Lambda)$.

\bigskip The structure of the paper is as follows.  In
\S~\ref{sec:prelim} we recall the definition of boundary
representation of a free group -- and, more generally, of a Gromov
hyperbolic group -- and the construction of the boundary
multiplicative representations of the free group; we recall moreover
from \cite{Iozzi_Kuhn_Steger_stab} the stability properties of the
class of representations of the free group obtained from matrix
systems with an inner product.  In \S~\ref{sec:lambda} we define the
classes $\mathbf{Mult}(\Lambda)$ and $\mathbf{Mult}_\mathrm{irr}(\Lambda)$ of 
representations of a finitely generated virtually free group $\Lambda$
obtained by induction from any free subgroup of finite index and we show
both that $\mathbf{Mult}(\Lambda)$ and
$\mathbf{Mult}_\mathrm{irr}(\Lambda)$ are $\Aut(\Lambda)$-invariant and
that these representations are tempered. In
\S~\ref{sec:result-hyperbolic} we prove that (irreducible) tempered
representations of a Gromov hyperbolic group $G$ are cocycle
representations with respect to an (ergodic) measure and we deduce the
analogue of Herz majorization
principle (Theorem~\ref{prop:herz}).  In the Appendix~\ref{app:2}
we prove the essential stability results for multiplicative boundary
representations that are not proven in \cite{Iozzi_Kuhn_Steger_stab}.

\section{Preliminaries}\label{sec:prelim}
\subsection{Boundary Representations}\label{sec:boundary-rep}

\begin{definition}
Let $G$ be any discrete group, $\mcA$ be a commutative $C^\ast$-algebra and
$\lambda:G\to \Aut(\mcA)$ a homomorphism of $G$ 
into the group of isometric automorphisms of $\mcA$. 
A \defn{covariant representation} of $(G,\mcA)$ on a Hilbert space $H$  is a triple
$(\pi,\alpha,\mcH)$ where
\begin{itemize}
\item
 $\pi:G\to\mcU(H)$ is a unitary representation of $G$;
\item $\alpha:\mcA\to\mcL(H)$ is a $C^\ast$-representation of $\mcA$
in the space of bounded linear operators; 
\item for all $\gamma\in G$ and $A\in\mcA$
\begin{equation*}
\pi(\gamma)\alpha(A)\pi(\inv\gamma)=\alpha\big(\lambda(\gamma)A\big)\,.
\end{equation*}
\end{itemize}
Two covariant representations $(\pi,\alpha,\mcH)$ and $(\rho,\beta,\mcL)$ of $G$ and $\mcA$
are {\em equivalent} if there exists a unitary operator $J:\mcH\to\mcL$,
such that for all $\gamma\in G$ and all $A\in\mcA$,
\begin{equation*}
\rho(\gamma)\,J=J\,\pi(\gamma)\qquad\text{ and }\qquad\beta(A)\,J=J\,\alpha(A)\,.
\end{equation*}
\end{definition}

\medskip
If $K$ is any compact metrizable space on which $G$ acts continuously and
by isometries, the space of complex valued functions $\mcC(K)$ is a $C^\ast$-algebra
naturally endowed with a continuous isometric action of $G$, 
$\lambda:G\to\Aut\big(\mcC(K)\big)$,
defined by 
\begin{equation*}
\lambda(\gamma)F(k):=F(\inv\gamma k)\,,
\end{equation*} 
for all $F\in \mcC(K)$, $\gamma\in G$ and $k\in K$.

In the case in which $G$ is a Gromov
hyperbolic group, the space $K$ can be taken to be the boundary 
of the Cayley graph associated to a fixed generating system, which we
denote by $\partial G$.  For the sake of the reader, we recall the definition of 
$\partial G$ in the appendix.  We
mention here only that $\partial G$ is a compact metrizable space with the
$G$-action defined by $(\gamma,\omega)\mapsto\inv\gamma\omega$
and that different generating sets correspond to homeomorphic
boundaries. %We can thus denote by $\partial\Lambda$ the boundary of a Gromov
%hyperbolic group $\Lambda$.

\begin{definition} A {\em boundary representation} of a hyperbolic
  group $G$ on $H$ is a covariant representation $(\pi,\alpha,H)$
  of $\big(G,\mcC(\partial G)\big)$.
\end{definition}

The reader who is familiar with crossed-product $C^\ast$-algebras will
recognize that a boundary representation is nothing but a
representation of the crossed product $C^\ast$-algebra $G\ltimes\mcC(\partial G)$ 
(see \S~\ref{sec:result-hyperbolic}).

General Gromov hyperbolic groups will be considered again in their
full generality in \S~\ref{sec:result-hyperbolic}, while in the rest
of this section we will consider only free groups.

\subsection{Boundary Multiplicative Representations of the Free Group}
We begin with the definition of {\it multiplicative representation} 
in the context of finitely generated free groups, referring  to
\cite{K-S3}  for details and proofs.

Let $\Ff_A$ be a free group with a finite symmetric set of free
generators $A$.  A {\em matrix system} $(V_a,H_{ba})$, is an
assignment of a vector space $a~\mapsto~V_a$ for every generator $a\in
A$ and a linear map $H_{ba}:V_a\to V_b$, for every $a,b~\in~A$, such
that $H_{ba}=0$ whenever $ba=e$.  An {\em invariant subsystem}
$(W_a,H_{ba})$ of the matrix system $(V_a,H_{ba})$ is an assignment of
vector subspaces $a\mapsto W_a\subseteq V_a$ such that
$H_{ba}W_a\subset W_b$ for all $a,b\in A$.  If $(W_a,H_{ba})$ is an
invariant subsystem of $(V_a,H_{ba})$, the {\em quotient system}
$(\widetilde{V}_a,\widetilde{H}_{ba})$ is the assignment $a\mapsto
\widetilde{V}_a:=V_a/W_a$ such that
$\widetilde{H}_{ba}\widetilde{v}_{a}=H_{ba}v_a$, for any
representative $v_a$ of $\widetilde{v}_a\in\widetilde{V}_a$.  A system
$(V_a,H_{ba})$ is called {\em irreducible} if it is nonzero and admits
no nontrivial invariant subsystems.

We endow $\Ff_A$ with the word metric $d(x,e):=|x|$
with respect to the generating set $A$.
We say that a function 
\begin{equation*}
f:\Ff_A\to\bigsqcup_{a\in A} V_a
\end{equation*}
is {\em multiplicative} if there exists $N\geq0$, depending only on $f$,
such that for all $x$ with $|x|\geq N$
\begin{equation}\label{1.1}
\begin{alignedat}{3}
&f(x)\in V_a          &\quad&\text{ if }x=x'a\text{ is reduced}\\
&f(xb)=H_{ba}f(x)&\quad&\text{ if }x=x'a\text{ is reduced and }ba\neq e\,.
\end{alignedat}
\end{equation}
We denote by $\mcH^\infty(V_a,H_{ba})$ (or by $\mcH^\infty$ if no
confusion arises) the quotient of the space of multiplicative
functions with respect to the equivalence relation according to which
two multiplicative functions are equivalent if they differ only on
finitely many words.

If for every $a\in A$ the $V_a$'s are equipped with a positive
definite sesquilinear form $B_a$ and if these forms satisfy the {\em
  compatibility condition}
\begin{equation}\label{E-cond-B}
B_a(v_a,v_a)=\sum_{b\in A}B_b(H_{ba}v_a,H_{ba}v_a)
\end{equation}
for all $v_a\in V_a$,
then
\begin{equation}\label{1.2}
\langle f_1,f_2\rangle
:=\sum_{|x|=N}\;\;\sum_{ \substack{ \;a\\ |xa|=|x|+1}}
B_a\big(f_1(xa),f_2(xa)\big)
\end{equation}
defines an inner product on $\mcH^\infty$,
where $N$ should be taken to be large enough that both $f_1$ and $f_2$ satisfy \eqref{1.1}
outside the ball of radius $N$.  We remark that, up to a normalization, 
every matrix system $(V_a,H_{ba})$ admits a compatible tuple $(B_a)_{a\in A}$
of positive semidefinite forms. 
When the matrix system is irreducible,
then each $B_a$ is strictly definite positive and, up to multiple scalars, 
it is also unique. Whether the system is irreducible or not,
the triple $(V_a,H_{ba},B_a)$ will be called
a {\em matrix system with inner product}.
We can hence define a representation of $\Ff_A$ on $\mcH^\infty(V_a,H_{ba})$ by
\begin{equation*}\label{repr}
\big(\pi(x)f\big)(y):=f(\inv x y)\,,
\end{equation*}
which can be proved to be unitary. 
If $\mcH(V_a,H_{ba},B_a)$ is the completion of
$\mcH^\infty(V_a,H_{ba})$ with respect to the inner product in
\eqref{1.2}, then $\pi$ extends to a unitary representation on
$\mcH(V_a,H_{ba},B_a)$, which we called {\em multiplicative}.

\medskip

The next step is to show that multiplicative representations are
in fact boundary representations of the free group.  

The boundary $\partial\Ff_A$ of a free group $\Ff_A$ consists of the set of infinite
reduced words, with the topology defined by  the basis
\begin{equation*}
\partial\Ff_A(x):=\{\omega\in\partial\Ff_A:\,\text{the reduced word for }\omega\text{ starts with }x\}\,,
\end{equation*}
for all $x\in\Ff_A$, $x\neq e$.  The sets $\partial\Ff_A(x)$ are both open and
closed in $\partial\Ff_A$ and $\partial\Ff_A$ is a compact (as well as Hausdorff, perfect,
separable, and totally disconnected) space.  For every $x\in\Ff_A$,
$x\neq e$, let $\one_{\partial\Ff_A(x)}$ denote the characteristic function of
$\partial\Ff_A(x)$.  In order to show that a given unitary representation
$(\pi,\mcH)$ of $\Ff_A$ is a boundary representation we need to define
an algebra $C^\ast$-homomorphism $\alpha:\mcC(\partial\Ff_A)\to\mcL(\mcH)$
satisfying
\begin{equation}\label{prodinc}
\pi(x)\alpha(F)\pi(x^{-1})=\alpha\big(\lambda(x)F\big)\,,
\end{equation}
for any $x\in\Ff_A$ and~$F\in \mcC(\partial\Ff_A)$.

Since  the subalgebra spanned by the
functions $\{\one_{\partial\Ff_A(x)}\}_{x\in\Ff_A}$  is a dense $C^\ast$-subalgebra of $\mcC(\partial\Ff_A)$,
it is sufficient to define 
$\alpha_\pi(\one_{\partial\Ff_A(x)})$ for every $x$, and in fact  on the dense subspace $\mcH^\infty\subset\mcH$.
Denote by $\one_{\Ff_A(x)}$ the characteristic function
of the cone
\begin{equation}\label{eq:fx}
\Ff_A(x):=\{y\in\Ff_A:\,\text{the reduced word for }y\text{ starts with }x\}
\end{equation}
and define $\alpha_\pi(\one_{\partial\Ff_A(x)}):\mcH^\infty\to
\mcH^\infty$ 
by setting
\begin{equation}\label{eq:bdryrep}
\big(\alpha_{\pi}(\one_{\partial\Ff_A(x)})f\big)(y):=\one_{\Ff_A(x)}(y)f(y)=
\begin{cases}
f(y) &\text{if $y\in \Ff_A(x)$}\\
0    &\text{otherwise}\,.
\end{cases}
\end{equation}
A routine calculation shows that \eqref{prodinc} is verified and hence every
multiplicative representation $(\pi,\mcH)$ 
is a boundary representation
$(\pi,\alpha_\pi,\mcH)$ of $\Ff_A$.
\begin{remark}\label{rem:facts}
\begin{enumerate}
\item When a boundary representation  is considered as a representation
  of $\Ff_A$ it is always weakly contained in the regular
  representation. This follows from general considerations 
  since $\Ff_A$ acts amenably on $\partial\Ff_A$; a two pages proof specifically  for the
  case of the free group can be found in \cite[\S~2]{K-S1}.
\item In \cite{K-S3} it is shown that multiplicative representations built 
from an irreducible system are irreducible as boundary representations, (that is
as representations of  the cross-product $\Ff_A\ltimes \mcC(\partial\Ff_A)$, see \S~\ref{sec:result-hyperbolic})
while, as representations of $\Ff_A$, they are either irreducible or, in some
special cases, are sum of two irreducible nonequivalent representations.
\end{enumerate}
\end{remark}

\subsection{Stability Properties of Boundary Multiplicative Representations}\label{sec:stability}
The definition of multiplicative representation
seems to depend on the generating set $A$ that we have fixed. We shall see that
the dependence  is only apparent, as soon as we allow general (not only irreducible)
matrix systems. The advantage of considering general matrix systems is that 
the new class of  representations so obtained is closed under change of generators, 
restriction and induction. The price to pay is not so high, as the following result shows:

\begin{theorem}[\cite{Iozzi_Kuhn_Steger_stab}]\label{thm:decomposition}
If $\pi$ is a representation constructed from a matrix system with inner product $(V_a,H_{ba},B_a)$,
then $\pi$ decomposes as an orthogonal direct sum with respect to $(B_a)_{a\in A}$
of a finite number of representations defined from irreducible matrix
systems and the same is true when $\pi$ is considered as a boundary representation.
\end{theorem}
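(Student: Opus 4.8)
The plan is to reduce the statement to a purely linear-algebraic decomposition of the matrix system, and then transport that decomposition through the two relevant structures: the multiplicative representation on $\mcH^\infty$, and the boundary operators $\alpha_\pi(\one_{\partial\Ff_A(x)})$. First I would establish the algebraic claim: given a matrix system with inner product $(V_a,H_{ba},B_a)$, the collection of invariant subsystems is closed under the operations of sum and of $B_a$-orthogonal complement, so that the $V_a$ decompose as a finite $B_a$-orthogonal direct sum $V_a=\bigoplus_{i} W_a^{(i)}$ where each $(W_a^{(i)},H_{ba})$ is an irreducible subsystem. The key point here is that the orthogonal complement of an invariant subsystem is again invariant: this uses the compatibility condition \eqref{E-cond-B}, which says precisely that, for the forms $B_a$, the ``total'' map $v_a\mapsto (H_{ba}v_a)_b$ is an isometry onto its image, hence the family $(H_{ba})$ behaves like a family of partial isometries between the $B_a$-inner-product spaces, and the adjoint of an invariant family preserves invariance of the complement. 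Finiteness is automatic since $\sum_a\dim V_a<\infty$. The restriction of $(B_a)$ to each $W_a^{(i)}$ is still compatible (the compatibility identity is a quadratic identity that survives restriction to an invariant subsystem once one knows the off-diagonal pieces vanish on the complement), so each summand is itself a matrix system with inner product, and by the uniqueness statement recalled before Theorem~\ref{thm:decomposition} the restricted form is strictly positive definite on the irreducible piece.

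Next I would check that this algebraic direct sum induces an orthogonal direct sum of multiplicative function spaces: a multiplicative function $f$ for $(V_a,H_{ba})$ is, outside a large ball, a section valued in the $V_a$'s intertwining the $H_{ba}$; decomposing the value $f(x)\in V_a$ according to $V_a=\bigoplus_i W_a^{(i)}$ and using that each $W_a^{(i)}$ is $H_{ba}$-invariant shows each component $f^{(i)}$ is again multiplicative, now valued in $(W_a^{(i)},H_{ba})$. This gives a linear isomorphism $\mcH^\infty(V_a,H_{ba})\cong\bigoplus_i\mcH^\infty(W_a^{(i)},H_{ba})$, and from the defining formula \eqref{1.2} together with the $B_a$-orthogonality of the $W_a^{(i)}$ the inner product splits as a direct sum; completing, we get a unitary identification of $\mcH(V_a,H_{ba},B_a)$ with the Hilbert direct sum of the $\mcH(W_a^{(i)},H_{ba},B_a|_{W_a^{(i)}})$. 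Since the $\Ff_A$-action $(\pi(x)f)(y)=f(\inv x y)$ is defined pointwise and commutes with the pointwise decomposition of values, each summand is $\pi$-invariant and $\pi$ restricted to it is exactly the multiplicative representation of the irreducible subsystem; this proves the first half of the theorem.

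Finally, for the boundary part I would verify that the subspaces are also invariant under the algebra $\alpha_\pi(\mcC(\partial\Ff_A))$, which by density of $\{\one_{\partial\Ff_A(x)}\}$ reduces to the generators $\alpha_\pi(\one_{\partial\Ff_A(x)})$. But by \eqref{eq:bdryrep} this operator acts on $\mcH^\infty$ by multiplication by the scalar function $\one_{\Ff_A(x)}(y)$, i.e. it does nothing to the \emph{value} $f(y)\in V_a$ and only multiplies it by $0$ or $1$; therefore it visibly preserves each component $f^{(i)}$, and the restriction of $\alpha_\pi$ to the $i$-th summand is exactly $\alpha_{\pi^{(i)}}$ for the irreducible subsystem. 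Hence the same finite orthogonal decomposition works simultaneously for the covariant representation $(\pi,\alpha_\pi,\mcH)$ of $\big(\Ff_A,\mcC(\partial\Ff_A)\big)$. I expect the only real obstacle to be the first step — proving that the $B_a$-orthogonal complement of an invariant subsystem is invariant, and that restricting the compatible forms keeps them compatible — since everything afterward is a bookkeeping argument; this is exactly where the compatibility condition \eqref{E-cond-B} is essential, and is presumably the content of the cited argument in \cite{Iozzi_Kuhn_Steger_stab}.
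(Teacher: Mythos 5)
Your third paragraph (the boundary operators act diagonally, by multiplication by $\one_{\Ff_A(x)}$, so they respect any value‑wise splitting) is fine, and so is the transport of a $B_a$-orthogonal splitting of the system through $\mcH^\infty$ and the inner product \eqref{1.2}. The problem is the step you yourself single out as the crux: the claim that the $B_a$-orthogonal complement of an invariant subsystem is again invariant is false, and the compatibility condition \eqref{E-cond-B} cannot rescue it. Condition \eqref{E-cond-B} only says that, for each source letter $a$, the stacked map $v_a\mapsto (H_{ba}v_a)_b$ is an isometry of $(V_a,B_a)$ into $\bigoplus_b (V_b,B_b)$; it says nothing about how the individual blocks $H_{ba}$ interact with a subspace and its complement. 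Indeed $W^\perp$ is invariant under the family $(H_{ba})$ if and only if $W$ is invariant under the adjoint family $(H_{ba}^{\ast})$, and nothing forces the latter. A concrete counterexample: for $A=\{a,b,\inv a,\inv b\}$ take $V_c=\CC^2$ and $B_c$ the standard inner product for every $c\in A$, and for each source letter $c$ let the three maps $H_{dc}$ with $d\neq\inv c$ be
\[
\begin{pmatrix} 1/\sqrt2 & 1/2\\ 0 & 1/2\end{pmatrix},\qquad
\begin{pmatrix} 1/\sqrt2 & -1/2\\ 0 & 1/2\end{pmatrix},\qquad
0\,.
\]
The two columns of the stacked $6\times2$ matrix are orthonormal, so \eqref{E-cond-B} holds with these $B_c$; the lines $W_c=\CC e_1$ form an invariant subsystem, but $W_c^\perp=\CC e_2$ is not invariant, since $H_{dc}e_2$ has a nonzero $e_1$-component. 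So matrix systems with inner product are not ``semisimple'' in your sense, and your first step fails exactly where you expected the compatibility condition to do the work.

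This is not a repairable detail but a sign that the route is too naive: the paper itself signals that quotient systems are unavoidable (it defines them, and notes after Theorem~\ref{thm:stab} that a principal series representation, after a change of generators, arises from a quotient of a reducible system rather than from a summand). Note also that the paper contains no internal proof of Theorem~\ref{thm:decomposition}: it is quoted from \cite{Iozzi_Kuhn_Steger_stab}, with only the remark that upgrading from representations of $\Ff_A$ to boundary representations is a routine verification — your last paragraph is essentially that routine part, and it is correct. What the quoted result actually requires, and what your proposal is missing, is the harder statement that for an invariant subsystem $(W_a)\subset(V_a)$ the orthogonal complement in $\mcH(V_a,H_{ba},B_a)$ of the closure of the multiplicative functions eventually valued in $(W_a)$ is again a multiplicative representation; the natural identification is with the representation built from the quotient system $(\widetilde V_a,\widetilde H_{ba})$ endowed with a suitable compatible tuple of forms, which is not obtained by restricting $(B_a)$ to any complement inside $(V_a)$ and has to be produced by a separate argument.
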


We proceed now to infer further properties of multiplicative
representations of $\Ff_A$.

\begin{theorem}[\cite{Iozzi_Kuhn_Steger_stab}]
\label{thm:stab}
Let $\Ff_A$ be a group freely generated by the symmetric set $A$
and let $\big(\pi,\alpha_\pi,\mcH(V_a,H_{ba},B_a)\big)$
be a multiplicative boundary representation constructed from a matrix
system with inner products $(V_a,H_{ba},B_a)_{a\in A}$.
If $A'$ is another symmetric set of free generators 
such that $\Ff_A\cong\Ff_{A'}$, 
then there exists a multiplicative boundary representation
$\big(\pi',\alpha_{\pi'},\mcH(V_s,H_{ts},B_s)\big)$ constructed from
a matrix system with inner products $(V_s,H_{ts},B_s)_{s\in A'}$, such that
$\big(\pi,\alpha_\pi,\mcH(V_a,H_{ba},B_a)\big)$
appears either as a subrepresentation or as a quotient 
of $\big(\pi',\alpha_{\pi'},\mcH(V_s,H_{ts},B_s)\big)$.
\end{theorem}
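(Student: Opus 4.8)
The plan is to reduce to a single elementary change of free basis and then, for such a move, to enlarge the matrix system by recording with finitely many ``states'' the way $A'$-reduced words sit inside $A$-reduced words. For the reduction, note first that all the representations in play are unitary, so a closed invariant subspace and its orthogonal complement exhibit a subrepresentation and a quotient simultaneously; hence ``subrepresentation or quotient of'' means ``isometrically a direct summand of'', which is a transitive relation, and moreover one may pass freely between a matrix system and its irreducible constituents by Theorem~\ref{thm:decomposition}. Identifying $\Ff_{A'}$ with $\Ff_A$, the set $A'$ is a free basis, so $A'=\phi(A)$ for some $\phi\in\Aut(\Ff_A)$, and $\Aut(\Ff_A)$ is generated by the elementary Nielsen transformations; relabellings and inversions only rename the data $(V_a,H_{ba},B_a)$, so by transitivity it suffices to treat one move $a_0\mapsto a_0b_0$ (which forces $a_0^{-1}\mapsto b_0^{-1}a_0^{-1}$ and is the identity on the rest of $A$). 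Alternatively one may skip the reduction and work directly with the finite-state conversion between the two normal forms of $\Gamma:=\Ff_A=\Ff_{A'}$.

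Fix such a $\phi$ and put $A'=\phi(A)$. Each $s\in A'$ is a reduced word of length $\le 2$ in $A$, and the passage from the $A'$-reduced normal form of a group element to its $A$-reduced normal form --- including the cancellations that occur at the junction between two consecutive $A'$-blocks --- can be performed by a finite-state transducer with state set $\mcS$. I would use $\mcS$ to build the new data $(V'_s,H'_{ts},B'_s)_{s\in A'}$: let $V'_s$ be a finite direct sum, indexed by the states that can occur with $s$ as the current last $A'$-letter, of copies of the $V_a$'s attached to the possible last $A$-letters in those states; let $H'_{ts}$ be the composite of the relevant old maps $H_{ba}$ along the $A$-block of $t$ together with the state transition (so that $H'_{ts}=0$ precisely when $ts=e$); and let $B'_s$ be the orthogonal sum of the corresponding $B_a$'s. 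The substance of this step is checking the compatibility condition \eqref{E-cond-B} \emph{for $A'$}, which one gets by iterating \eqref{E-cond-B} for $A$ along the $A$-letters constituting an $A'$-block and then regrouping the $A$-sphere into $A'$-blocks. The only genuinely nontrivial point --- and the one I expect to be the main obstacle --- is that this regrouping is not a clean partition when the $A$-path backtracks at a seam, so one must track the cancellation carefully; it is exactly this one-sidedness that prevents the final comparison map from being unitary. Having built $(V'_s,H'_{ts},B'_s)$, let $(\pi',\alpha_{\pi'},\mcH(V'_s,H'_{ts},B'_s))$ be the associated multiplicative boundary representation (decomposed into irreducible systems by Theorem~\ref{thm:decomposition} if one wishes).

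It remains to write down the intertwiner. Given a representative $f$ of a class in $\mcH^\infty(V_a,H_{ba})$ that is $A$-multiplicative outside a large ball, define $f'$ on $A'$-reduced words by letting $f'(y_1\cdots y_m)$ record the value of $f$ at the group element $y_1\cdots y_m$ together with the transducer state reached after processing it; one checks that $f'$ is $A'$-multiplicative outside a ball for the new system, that it depends only on the class of $f$, and that $f\mapsto f'$ intertwines the left translations $\pi$ and $\pi'$. Comparing the inner products \eqref{1.2} for the two systems through the relation between $A'$-spheres and finite unions of $A$-spheres shows that $f\mapsto f'$ extends to an isometry, exhibiting $\pi$ as a subrepresentation of $\pi'$ (for the inverse Nielsen move the same recipe yields a coisometry and $\pi$ as a quotient, which is why the statement is phrased with ``or''). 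For the $C^\ast$-parts, $\partial\Ff_A=\partial\Ff_{A'}=\partial\Gamma$ as $\Gamma$-spaces, so the clopen set $\partial\Ff_A(x)$ is a finite disjoint union of basic sets $\partial\Ff_{A'}(w_i)$, and the cone $\Ff_A(x)$ differs from $\bigsqcup_i\Ff_{A'}(w_i)$ by a finite subset of $\Gamma$; since multiplication by a finitely supported function acts as $0$ on $\mcH^\infty$ (functions modulo finite support) and $\alpha_\pi$, $\alpha_{\pi'}$ are both given by the ``multiply by the indicator of a cone'' formula \eqref{eq:bdryrep}, a direct check shows $f\mapsto f'$ carries $\alpha_\pi(\one_{\partial\Ff_A(x)})$ to $\alpha_{\pi'}(\one_{\partial\Ff_A(x)})$. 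As the $\one_{\partial\Ff_A(x)}$ span a dense subalgebra of $\mcC(\partial\Gamma)$, the relation \eqref{prodinc} is respected and $(\pi,\alpha_\pi,\mcH)$ indeed appears inside $(\pi',\alpha_{\pi'},\mcH(V'_s,H'_{ts},B'_s))$ as a boundary representation.
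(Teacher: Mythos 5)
The first thing to be aware of is that this paper does not actually prove Theorem~\ref{thm:stab}: it is imported verbatim from the companion paper \cite{Iozzi_Kuhn_Steger_stab}, and the only thing the present paper adds is the remark that upgrading from representations of $\Ff_A$ to \emph{boundary} representations is a routine verification, carried out in detail only for the induction statement (Theorem~\ref{thm:ind}) in Appendix~\ref{app:2}. So your attempt has to be judged as a reconstruction of the companion paper's argument. Your outer scaffolding is reasonable: for unitary representations (and for $*$-representations of the crossed product) ``subrepresentation or quotient'' does amount to ``direct summand'', which is transitive, so reducing to elementary Nielsen transformations is legitimate; and your treatment of the $C^\ast$-part -- each clopen $A$-cone $\partial\Ff_A(x)$ is a finite disjoint union of $A'$-cylinders, the corresponding cones in the group differ by finite sets, and finitely supported functions act by zero on $\mcH^\infty$ -- is exactly the kind of ``straightforward verification'' the paper alludes to.

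The genuine gap is at the core of the theorem, at the step you yourself call ``the main obstacle'' and then do not resolve. For a single move $a_0\mapsto a_0b_0$ you never actually specify the transducer states, the spaces $V'_s$, or the maps $H'_{ts}$, and you never verify the compatibility condition \eqref{E-cond-B} for the new forms $B'_s$; without that, \eqref{1.2} does not even give a well-defined ($N$-independent) inner product on the new space of multiplicative functions, so the object $\big(\pi',\alpha_{\pi'},\mcH(V'_s,H'_{ts},B'_s)\big)$ has not been constructed. The difficulty is exactly the backtracking at seams: appending an $A'$-letter can shorten the $A$-reduced word, so $f'(yb')$ is \emph{not} a forward image under the old maps $H_{ba}$ of $f'(y)$ unless $V'_s$ stores enough of the pending suffix, and $A'$-spheres are not unions of $A$-spheres, so your comparison of the two inner products and the claimed isometry (respectively coisometry, for the inverse move) of $f\mapsto f'$ is asserted rather than proved. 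That bookkeeping, together with the resulting one-sided estimate which is the very reason the conclusion reads ``subrepresentation \emph{or} quotient'' instead of ``equivalent'', is the substance of the theorem in \cite{Iozzi_Kuhn_Steger_stab}; as written, your text is a plausible outline with the decisive verification missing.
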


We can therefore denote a free group by $\Gamma$ without any explicit dependence
on a free generating set.

We warn the reader that there is no guarantee that changing generators will
preserve the irreducibility of the system:
in \cite{Iozzi_Kuhn_Steger_stab} it is shown 
that a representation of the principal series for the free group can be realized
as a multiplicative representation from an irreducible matrix system, but, 
once the simplest nontrivial change of generator is performed, 
it arises from a quotient of a reducible matrix system.

\begin{theorem}[\cite{Iozzi_Kuhn_Steger_stab}]\label{thm:4.4}
Let  $\Gamma_0 \leq \Gamma$ be  a subgroup of finite index in the free group 
$\Gamma$.
% and let $A'$ and $A$ be free symmetric generating sets for $\Gamma_0$ and $\Gamma$ respectively.
Then:
\begin{enumerate}
\item the restriction to $\Gamma_0$ of a multiplicative boundary representation 
$(\pi,\alpha_\pi,\mcH)$ of $\Gamma$
 is a boundary multiplicative representation  of $\Gamma_0$;
\item if $(\pi',\alpha_{\pi'}',\mcH)$ is a boundary multiplicative representation of $\Gamma_0$, 
 then the induced representation  $\ind{\pi'}$ is a boundary multiplicative representation
 of $\Gamma$.
\end{enumerate}
\end{theorem}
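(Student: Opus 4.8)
The plan is to prove both statements by transporting the defining matrix system across the finite–index inclusion, using Reidemeister–Schreier combinatorics to match free generating sets and the quasi-isometry $\Gamma_0\hookrightarrow\Gamma$ to match boundaries. First I would trim the bookkeeping: by Theorem~\ref{thm:decomposition} it suffices to treat the case in which $(\pi,\alpha_\pi,\mcH)$ (resp.\ $(\pi',\alpha_{\pi'},\mcH')$) comes from an \emph{irreducible} matrix system with inner product, and by Theorem~\ref{thm:stab} we may replace the generating set of $\Gamma$ (resp.\ of $\Gamma_0$) by any other one. So fix a free generating set $A$ of $\Gamma$ and a Schreier right–transversal $T$ for $\Gamma_0\backslash\Gamma$, prefix–closed for reduced $A$-words, and let $A_0$ be the symmetrized Reidemeister–Schreier free generating set of $\Gamma_0$, whose elements are the nontrivial $ta(t')^{-1}$ with $t,t'\in T$, $a\in A$. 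The inclusion $\Gamma_0\hookrightarrow\Gamma$ is a quasi-isometry and hence induces a $\Gamma_0$-equivariant homeomorphism $\partial\Gamma_0\cong\partial\Gamma$; under it a cone $\Gamma_0(x)$ and a cone $\Ff_A(y)$ agree up to finitely many infinite words, so each $\one_{\partial\Gamma_0(x)}$ is a finite $\ZZ$-combination of functions $\one_{\partial\Ff_A(y)}$ and conversely. Consequently the $C^\ast$-algebra part transfers for free in either direction: the $\alpha$-map is simply read through this identification, and covariance \eqref{prodinc} is inherited. Thus in both (1) and (2) the remaining task is to recognise the Hilbert-space representation as coming from a matrix system with inner product over the relevant generating set.

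For (1) I would build the ``Schreier–contracted'' system $(V_s',H_{s's}',B_s')_{s\in A_0}$ from $(V_a,H_{ba},B_a)_{a\in A}$: to $s\in A_0$ with reduced $A$-word $w_s$, attach $V_s':=V_a$ for $a$ the terminal $A$-letter of $w_s$, set $B_s':=B_a$, and define $H_{s's}'$ by composing the maps $H_{ba}$ along the reduced $A$-word obtained when $w_{s'}$ is concatenated after a word ending in $s$ (the Schreier choice of $T$ keeps the cancellation at the junction controlled, so a genuine reduced word appears, and the clause $H_{s's}'=0$ for $s's=e$ is respected). The compatibility condition \eqref{E-cond-B} for the contracted system then follows by telescoping \eqref{E-cond-B} for the original one along these paths. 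Finally, for a multiplicative $f\colon\Gamma\to\bigsqcup_a V_a$ with constant $N$, its restriction $x\mapsto f(x)$ to $x\in\Gamma_0$ (with values read into $V_s'$ via the terminal-letter identification) is, once $|x|$ is large enough, multiplicative for the contracted system; sending $f$ to this function gives an isometry $\mcH(V_a,H_{ba},B_a)\to\mcH(V_s',H_{s's}',B_s')$ intertwining $\pi|_{\Gamma_0}$ with the multiplicative representation of the new system. Surjectivity holds because any multiplicative function for the contracted system extends, on the finitely many short words where it is not yet determined, in the way forced by \eqref{1.1}, and such bounded-length ambiguities die in $\mcH^\infty$.

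For (2) I would run the same bridge backwards: $\ind{\pi'}$ acts on sections $\xi\colon\Gamma\to\mcH'$ with $\xi(\gamma_0 g)=\pi'(\gamma_0)\xi(g)$, i.e.\ on $\bigoplus_{t\in T}\mcH'$ via $\xi\mapsto(\xi(t))_{t\in T}$. I would ``blow up'' the matrix system $(V_s,H_{ts},B_s)_{s\in A_0}$ of $\pi'$ to a matrix system on $\Gamma$ indexed by $A$, in which the fibre over $a\in A$ is a direct sum of copies of the $V_s$ tagged by those Reidemeister–Schreier data whose $A$-word has terminal letter $a$, with $H$-maps that read one $A$-letter at a time and fire the appropriate $H_{ts}$ exactly when a full Schreier syllable is completed. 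Unwinding the definitions identifies the multiplicative model $\mcH$ of this blow-up with $\bigoplus_{t\in T}\mcH'$ and the multiplicative $\Gamma$-action with $\ind{\pi'}$, while \eqref{E-cond-B} for the blow-up is again a rearrangement of \eqref{E-cond-B} for $(V_s,H_{ts},B_s)$ using that $T$ is a transversal, so each one-letter extension of a coset lies in a unique coset. The main obstacle, in both parts but most acutely here, is precisely this bookkeeping: arranging the blow-up so that the terminal $A$-letter of a reduced $\Gamma$-word correctly selects its fibre and so that $H_{ba}=0$ for $ba=e$ is honoured, and then controlling the finitely many short words where multiplicativity fails — which is exactly where the quotient defining $\mcH^\infty$, the generator-freedom from Theorem~\ref{thm:stab}, and a careful choice of the Schreier transversal all enter.
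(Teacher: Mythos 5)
There is a genuine gap, and it sits in your part (1). Your ``Schreier--contracted'' system, with $V'_s:=V_a$ for $a$ the terminal $A$-letter of $w_s$, together with the claim that $f\mapsto f|_{\Gamma_0}$ is an isometric intertwiner onto its multiplicative model, is false: a multiplicative function on $\Gamma$ can carry all of its norm on cosets other than $\Gamma_0$, so restriction to $\Gamma_0$ is not even injective. Concretely, let $A=\{a^{\pm1},b^{\pm1}\}$, let $\Gamma_0=\ker\phi$ where $\phi(a)=1$, $\phi(b)=0$ in $\ZZ/3\ZZ$, take $V_c=\CC$, $B_c=1$ for every $c$, and set $H_{ba}=H_{ba^{-1}}=H_{bb}=H_{ab^{-1}}=1$ with all other $H$'s zero; this satisfies \eqref{E-cond-B}. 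The function equal to $1$ on the ray $ab^k$, $k\ge1$, and $0$ on every other word of length $\ge2$ is multiplicative with $N=2$ and has norm one, yet it vanishes identically on $\Gamma_0$ beyond radius two, since no $ab^k$ lies in $\ker\phi$. The same phenomenon breaks your ``telescoping'' derivation of \eqref{E-cond-B} for the contracted system: grouping forward continuations by the first completed Schreier syllable omits exactly those continuations whose prefixes never return to $\Gamma_0$, and their mass need not vanish. Passing to irreducible systems first does not repair this, because irreducibility is never used to control the mass living on the other cosets. This is why the correct construction (the one in \cite{Iozzi_Kuhn_Steger_stab}, mirrored by the induction formula $V_a=\bigoplus_{u\in D}\bigoplus_{c'}V_{c'}$ recorded before Theorem~\ref{thm:ind}) enlarges the fibres by a direct sum over a fundamental domain, so that the values of $f$ on all translates of $D$, not just at points of $\Gamma_0$, are retained.

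For part (2) your blow-up is the right shape (it is essentially the system written out in Appendix~\ref{app:2}), but you dispose of the boundary structure with ``the $C^\ast$-algebra part transfers for free,'' and that is precisely where the paper has to do work. Defining the action \eqref{eq:C(Om)onHind} of $\mcC(\partial\Gamma)$ on the induced space via the homeomorphism $\partial\Gamma_0\cong\partial\Gamma$ is indeed easy; the nontrivial statement, proved in Theorem~\ref{thm:ind} by an explicit computation on indicators $\one_{\partial\Gamma(y)}$ and on functions supported on a single coset, is that the specific unitary $J$ of \eqref{eq:J} identifying $\ind{\pi'}$ with the multiplicative model also satisfies $J\,\Pi(F)=\alpha_\pi(F)\,J$, i.e.\ \eqref{intertw-cov}, where $\alpha_\pi$ is the canonical boundary action \eqref{eq:bdryrep}. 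Without that verification you have two boundary structures with no identification between them. Note also that the paper does not reprove the group-level restriction/induction statements at all -- it cites \cite{Iozzi_Kuhn_Steger_stab} and only supplies the boundary compatibility -- so your attempt is more ambitious in scope, but as written part (1) fails and part (2) omits the one point that actually needs proof.
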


Strictly speaking, the theorems stated in this section are proved in \cite{Iozzi_Kuhn_Steger_stab}   
when all the representations involved are considered only as representations of the free group
rather then boundary representations.
The extension of these results to the case of boundary representations is,
in most of the cases, a straightforward verification.  
The one that is a bit more involved is the proof of Theorem~\ref{thm:4.4}(2):
since it uses heavily the notations and the techniques of \cite{Iozzi_Kuhn_Steger_stab}, 
we defer it to the appendix of this paper.

The above theorems lead to the following:
\begin{definition} Let $\Gamma$ be a finitely generated  free group.
A representation $\rho:\Gamma\to\mcU(H)$ is in the class $\mathbf{Mult}(\Gamma)$
if there exist a symmetric  set $A$ of free generators,
a matrix system with inner product $(V_a,H_{ba},B_a)$, 
a dense subspace $M\subset H$
and a unitary operator $J:H\to\mcH(V_a,H_{ba},B_a)$ such that
\begin{enumerate}
\item $J$ is an isomorphism between $M$ and $\mcH^\infty(V_a,H_{ba}, B_a)$, and
\item for all $m\in M$ and $x\in\Gamma$, $J\big(\rho(x)m\big)=\pi(x)(Jm)$, 
where $\pi$ is the multiplicative representation constructed from
$(V_a,H_{ba},B_a)$.
\end{enumerate}
\end{definition}

\section{The Classes  $\mathbf{Mult}(\Lambda)$ and $\mathbf{Mult}_\mathrm{irr}(\Lambda)$}\label{sec:lambda}
Let $\Lambda$ be a finitely generated virtually free group.
\begin{definition}
We say that a representation $\pi$
of $\Lambda$ belongs to the class ${\mathbf{Mult}_0(\Lambda)}$
if it is contained in a representation obtained  inducing a
representation of the class $\mathbf{Mult}(\Gamma_0)$ where $\Gamma_0$ is
a free subgroup of finite index in  $\Lambda$. In other
words
\begin{equation*}
{\mathbf{Mult}_0(\Lambda)}:=\big\{\pi\in\Lambda\st \exists\,
\pi'\in\mathbf{Mult}(\Gamma_0)\st
\pi\leq \operatorname{Ind}_{\Gamma_0}^\Lambda(\pi')\big\}
\end{equation*}
\end{definition}

\begin{proposition}\label{prop:independence} The class ${\mathbf{Mult}_0(\Lambda)}$ does not depend on $\Gamma_0$.
\end{proposition}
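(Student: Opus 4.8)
The plan is to show that if $\Gamma_0$ and $\Gamma_1$ are two free subgroups of finite index in $\Lambda$, then the class obtained by inducing $\mathbf{Mult}(\Gamma_0)$ and the class obtained by inducing $\mathbf{Mult}(\Gamma_1)$ coincide. By symmetry it suffices to prove one inclusion, say that inducing from $\Gamma_0$ lands inside the class defined via $\Gamma_1$. First I would reduce to the case of nested subgroups: set $\Gamma_2 := \Gamma_0 \cap \Gamma_1$, which is again a free group of finite index in $\Lambda$ (finite index because it is an intersection of two finite-index subgroups; free because it is a subgroup of a free group). So it is enough to prove that for $\Gamma_2 \le \Gamma_0$ of finite index, both free, the two classes $\mathbf{Mult}_0(\Lambda)$ defined using $\Gamma_0$ and using $\Gamma_2$ agree. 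Then the general statement follows by going through $\Gamma_0 \supseteq \Gamma_2 \subseteq \Gamma_1$.

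For the nested case, the key inputs are the transitivity of induction and the stability of $\mathbf{Mult}(\Gamma)$ under restriction and induction (Theorem~\ref{thm:4.4}). In one direction, suppose $\pi \le \operatorname{Ind}_{\Gamma_0}^\Lambda(\pi')$ with $\pi' \in \mathbf{Mult}(\Gamma_0)$. By Theorem~\ref{thm:4.4}(1), the restriction $\pi'|_{\Gamma_2}$ is (a boundary multiplicative representation, hence in particular) in $\mathbf{Mult}(\Gamma_2)$; actually one needs the slightly stronger fact that $\pi'$ is \emph{contained in} $\operatorname{Ind}_{\Gamma_2}^{\Gamma_0}(\pi'|_{\Gamma_2})$, which holds for any subgroup of finite index and any unitary representation. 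Combining this with transitivity of induction, $\operatorname{Ind}_{\Gamma_0}^\Lambda(\pi') \le \operatorname{Ind}_{\Gamma_0}^\Lambda \operatorname{Ind}_{\Gamma_2}^{\Gamma_0}(\pi'|_{\Gamma_2}) = \operatorname{Ind}_{\Gamma_2}^\Lambda(\pi'|_{\Gamma_2})$, and since $\pi'|_{\Gamma_2} \in \mathbf{Mult}(\Gamma_2)$ this exhibits $\pi$ as a subrepresentation of an induced representation of the required form. In the other direction, suppose $\pi \le \operatorname{Ind}_{\Gamma_2}^\Lambda(\pi'')$ with $\pi'' \in \mathbf{Mult}(\Gamma_2)$. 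By Theorem~\ref{thm:4.4}(2), $\operatorname{Ind}_{\Gamma_2}^{\Gamma_0}(\pi'') \in \mathbf{Mult}(\Gamma_0)$, and by transitivity $\operatorname{Ind}_{\Gamma_2}^\Lambda(\pi'') = \operatorname{Ind}_{\Gamma_0}^\Lambda\big(\operatorname{Ind}_{\Gamma_2}^{\Gamma_0}(\pi'')\big)$; so $\pi$ is contained in a representation induced from $\mathbf{Mult}(\Gamma_0)$, as wanted.

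The main obstacle, such as it is, is the bookkeeping around the word ``contained in'' versus ``equal to'': one must be careful that $\mathbf{Mult}(\Gamma)$ is closed under passing to subrepresentations that come up in these manipulations — or, more precisely, that the inclusions above stay inside the class $\mathbf{Mult}_0(\Lambda)$, which is \emph{defined} to be closed under $\le$. That is exactly why $\mathbf{Mult}_0$ is phrased with ``$\pi \le \operatorname{Ind}(\pi')$'' rather than equality, and it makes the two inclusions go through cleanly. A secondary point to check is that $\Gamma_2 = \Gamma_0 \cap \Gamma_1$ is genuinely admissible, i.e.\ finitely generated and free of finite index in $\Lambda$ — this is standard (Nielsen–Schreier plus the index being finite). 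Once these two routine verifications are in place, the chain of inclusions through $\Gamma_2$ gives the equality of the two classes and hence the proposition.
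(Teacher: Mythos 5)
Your argument is correct and follows essentially the same route as the paper: pass to the finite-index free intersection $\Gamma_0\cap\Gamma_1$, use the containment of a representation in the induction of its restriction together with transitivity of induction, and invoke the restriction and induction stability statements of Theorem~\ref{thm:4.4}, with closure under $\le$ built into the definition of the class. The only cosmetic difference is that you organize it as two inclusions for nested subgroups, whereas the paper proves one inclusion in a single chain and concludes by symmetry.
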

\begin{proof}  Let  $\Gamma_1$ be another free group of finite index in $\Lambda$ and
${\mathbf{Mult}_1(\Lambda)}$ be the corresponding class of induced representations.
The stabilizer of the pair $\Gamma_0\times \Gamma_1\in \Lambda/{\Gamma_0}\times \Lambda/{\Gamma_1}$ 
for the diagonal action of $\Lambda$ is $\Gamma_0\cap \Gamma_1$. 
Hence $\Lambda/\Gamma_0\cap \Gamma_1$,  
as well as $\Gamma_0/\Gamma_0\cap \Gamma_1$ and $\Gamma_1/\Gamma_0\cap \Gamma_1$, are finite. 
Assume now that $\pi\in{\mathbf{Mult}_0(\Lambda)}$.  By definition there exists a representation 
$\pi_0$ of $\Gamma_0$ such that $\pi$ is a component of
$\operatorname{Ind}_{\Gamma_0}^\Lambda (\pi_0)$.  By general properties of induction 
(see for example \cite{Mackey}), we have that 
\begin{equation*}
\begin{aligned}
           \pi
\leq &\operatorname{Ind}_{\Gamma_0}^\Lambda(\pi_0)
\leq  \operatorname{Ind}_{\Gamma_0}^\Lambda
        \big(\operatorname{Ind}_{\Gamma_0\cap \Gamma_1}^{\Gamma_0}(\pi_0|_{\Gamma_0\cap \Gamma_1})\big)\\
     =& \operatorname{Ind}_{\Gamma_0\cap \Gamma_1}^\Lambda(\pi_0|_{\Gamma_0\cap \Gamma_1})
        = \operatorname{Ind}_{\Gamma_1}^\Lambda
     \big(\operatorname{Ind}_{\Gamma_0\cap \Gamma_1}^{\Gamma_1}(\pi_0|_{\Gamma_0\cap \Gamma_1})\big)\,.
\end{aligned}
\end{equation*}
By Theorem~\ref{thm:stab}(1) we know that 
$\pi_0|_{\Gamma_0\cap \Gamma_1}\in\mathbf{Mult}(\Gamma_0\cap \Gamma_1)$ 
and hence, by Theorem~\ref{thm:stab}(2), 
$\operatorname{Ind}_{\Gamma_0\cap \Gamma_1}^{\Gamma_1}(\pi_0|_{\Gamma_0\cap \Gamma_1})\in\mathbf{Mult}(\Gamma_1)$,
 so that
$\pi\in{\mathbf{Mult}_1(\Lambda)}$ and, by symmetry, ${\mathbf{Mult}_0(\Lambda)}={\mathbf{Mult}_1(\Lambda)}$.
\end{proof}

The above result justifies the following

\begin{definition}
We say that a representation $\pi$ of a virtually free group $\Lambda$ belongs
to the class ${\mathbf{Mult}(\Lambda)}$  if there exists a finite index
free subgroup $\Gamma\leq\Lambda$ and a representation
$\pi'$ in the class $\mathbf {Mult}(\Gamma)$ such that
$\pi$ is a component of $\operatorname{Ind}_{\Gamma}^\Lambda (\pi')$,
\begin{equation*}
\begin{aligned}
{\mathbf{Mult}(\Lambda)}:=\big\{\pi\in\Lambda\st \exists\, \pi'\in\mathbf{Mult}(\Gamma)
\text{ for some free subgroup }\Gamma\leq\Lambda&\\
\text{ of finite index such that }
\pi\leq \operatorname{Ind}_{\Gamma}^\Lambda (\pi')
\big\}&\,.
\end{aligned}
\end{equation*}
\end{definition}

The representations in the class ${\mathbf{Mult}(\Lambda)}$ are not necessarily irreducible,
but are however finitely reducible, as the following proposition shows:

\begin{proposition}\label{prop:finite-reducibility}  Let $\Lambda_0$ be a subgroup of
finite index of a group $\Lambda$ and let $\pi:\Lambda_0\to\mcU(\mcH)$ be an irreducible representation. 
Then $\big(\operatorname{Ind}_{\Lambda_0}^\Lambda(\pi),\operatorname{Ind}_{\Lambda_0}^\Lambda(\mcH)\big)$ 
is a finite sum of irreducible representations.
\end{proposition}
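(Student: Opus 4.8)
The plan is to prove this by showing that the commutant of $\operatorname{Ind}_{\Lambda_0}^\Lambda(\pi)$ is a finite\nobreakdash-dimensional von Neumann algebra, which is exactly equivalent to $\operatorname{Ind}_{\Lambda_0}^\Lambda(\pi)$ being a finite orthogonal direct sum of irreducibles. Indeed, if $\rho=\bigoplus_j m_j\sigma_j$ with pairwise inequivalent irreducibles $\sigma_j$ then $\rho(\Lambda)'\cong\bigoplus_j M_{m_j}(\mathbb C)$ is finite\nobreakdash-dimensional; conversely, if $\rho(\Lambda)'$ is finite\nobreakdash-dimensional then its minimal central projections lie in $\rho(\Lambda)'\cap\rho(\Lambda)''$, so they split $\rho$ into finitely many pieces each of which has a type $\mathrm I$ factor as commutant and hence is a finite multiple of an irreducible. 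So the whole proposition reduces to the finite\nobreakdash-dimensionality of $\operatorname{Hom}_\Lambda\big(\operatorname{Ind}_{\Lambda_0}^\Lambda\pi,\operatorname{Ind}_{\Lambda_0}^\Lambda\pi\big)$.

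To compute this space I would use Frobenius reciprocity together with Mackey's subgroup theorem (the intertwining number theorem, see \cite{Mackey}): since $[\Lambda:\Lambda_0]<\infty$ there are only finitely many double cosets $\Lambda_0 g\Lambda_0$, and
\[
\operatorname{Hom}_\Lambda\big(\operatorname{Ind}_{\Lambda_0}^\Lambda\pi,\operatorname{Ind}_{\Lambda_0}^\Lambda\pi\big)\;\cong\;\bigoplus_{\Lambda_0 g\Lambda_0}\operatorname{Hom}_{K_g}\big(\pi|_{K_g},({}^g\pi)|_{K_g}\big),
\]
where $K_g:=\Lambda_0\cap g\Lambda_0 g^{-1}$ and ${}^g\pi(x):=\pi(g^{-1}xg)$; for the trivial double coset this recovers $\operatorname{End}_{\Lambda_0}(\pi)=\mathbb C$, as it must. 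Each $K_g$ has finite index in $\Lambda_0$, so it only remains to check that for every $g$ the intertwiner space $\operatorname{Hom}_{K_g}\big(\pi|_{K_g},({}^g\pi)|_{K_g}\big)$ is finite\nobreakdash-dimensional.

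For this I would invoke the classical fact, part of the Clifford theory of unitary representations, that the restriction of an irreducible unitary representation to a subgroup of finite index is a finite direct sum of irreducibles. Granting it, $\pi|_{K_g}$ and $({}^g\pi)|_{K_g}$ are both finite sums of irreducible representations of $K_g$, so by Schur's lemma the dimension of the $K_g$\nobreakdash-intertwiners is at most the product of the (finite) numbers of their constituents; summing over the finitely many double cosets yields $\dim\operatorname{Hom}_\Lambda\big(\operatorname{Ind}_{\Lambda_0}^\Lambda\pi,\operatorname{Ind}_{\Lambda_0}^\Lambda\pi\big)<\infty$, which completes the argument. If a self\nobreakdash-contained proof of the restriction fact is wanted, I would argue as follows: replacing the finite\nobreakdash-index subgroup $K\le\Lambda_0$ by its normal core $K_0\trianglelefteq\Lambda_0$ only enlarges the commutant (since $\pi(K)'\subseteq\pi(K_0)'$), so one may assume $K_0$ normal; then the finite group $\Lambda_0/K_0$ acts by conjugation on the von Neumann algebra $\pi(K_0)'$, and its fixed\nobreakdash-point algebra is $\pi(K_0)'\cap\pi(\Lambda_0)'=\pi(\Lambda_0)'=\mathbb C$ by irreducibility of $\pi$; a finite group acting on a von Neumann algebra with scalar fixed\nobreakdash-point algebra forces that algebra to be finite\nobreakdash-dimensional.

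The main obstacle is precisely this last statement, i.e.\ that a von Neumann algebra $\mathcal A$ carrying an action of a finite group $Q$ with $\mathcal A^Q=\mathbb C$ is finite\nobreakdash-dimensional. I would handle it in two steps: first, $Q$ acts on the abelian algebra $Z(\mathcal A)$ with $Z(\mathcal A)^Q=\mathbb C$, i.e.\ ergodically, so $Z(\mathcal A)\cong\mathbb C^k$ with $k\le|Q|$, reducing to the case of a factor on which $Q$ acts transitively on nothing—i.e.\ to a factor $\mathcal A$ with scalar fixed points; second, such a factor cannot be of type $\mathrm{II}$, $\mathrm{III}$ or $\mathrm I_\infty$ (its fixed\nobreakdash-point algebra is a finite\nobreakdash-index subfactor, hence again infinite\nobreakdash-dimensional, in the first two cases, and an irreducible projective representation of the finite group, hence finite\nobreakdash-dimensional, in the last), so it is a matrix algebra. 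Everything else in the proof — Frobenius reciprocity, Mackey's subgroup theorem, Schur's lemma, and the dictionary between commutants and isotypic decompositions — is formal, so I would keep the exposition of those brief and concentrate whatever care is needed on the finite\nobreakdash-index restriction lemma, or else simply cite it.
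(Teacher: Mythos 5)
Your proposal is correct and follows essentially the same route as the paper: both reduce the statement to finite-dimensionality of the commutant of $\operatorname{Ind}_{\Lambda_0}^\Lambda(\pi)$, decompose an intertwiner according to the finitely many cosets of $\Lambda_0$, and rest on the same key external fact --- that the restriction of an irreducible unitary representation to a finite-index subgroup is finitely reducible --- which the paper simply cites (Poguntke) where you cite Clifford theory or sketch it via normal cores and ergodic finite group actions on von Neumann algebras. The only real difference is bookkeeping: the paper works with the explicit block decomposition $\mathcal{L}=\bigoplus_{u\in D}\mathcal{L}_u$, the evaluation operators $E_u$, and the blocks $p_v T p_u$ of an intertwiner viewed over the finite-index intersections $u\Lambda_0 u^{-1}\cap v\Lambda_0 v^{-1}$, while you package the identical computation through Frobenius reciprocity and Mackey's subgroup theorem for double cosets.
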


\begin{proof}  Let us denote $\rho:=\operatorname{Ind}_{\Lambda_0}^\Lambda(\pi)$ and 
$\mcL:=\operatorname{Ind}_{\Lambda_0}^\Lambda(\mcH)$.  Recall that 
\begin{equation*}
\mcL
:=\big\{f:\Lambda\to\mcH:\,\pi'({\gamma_0})f(\gamma)=f(\gamma\inv{{\gamma_0}}),\text{ for all }{\gamma_0}\in{\Lambda_0},\gamma\in\Lambda\big\}
\end{equation*}
on which $\Lambda$ acts by
\begin{equation*}
\big(\rho(\gamma)f\big)(\eta):=f(\inv{\gamma}\eta)
\end{equation*}
for all $\eta,\gamma\in\Lambda$.  
The fact that  ${\Lambda_0}$ is of finite index in $\Lambda$, namely $\Lambda=\sqcup_{u\in D}u{\Lambda_0}$, 
where $D$ is a finite set of representatives, induces a finite decomposition
\begin{equation}\label{eq:dsd}
\mcL=\bigoplus_{u\in D}\mcL_u\,,
\end{equation}
where
\begin{equation*}
\mcL_u:=\big\{f\in\mcL:\\,\operatorname{supp}(f)\subset u{\Lambda_0}\big\}\,.
\end{equation*}
It is immediate to verify that for all $\eta\in\Lambda$ and $u\in D$,
one has that $\rho(\eta)\mcL_u\subseteq\mcL_{\eta u}$ and hence
\begin{equation*}
\rho(u{\gamma_0}\inv{u})\mcL_u\subseteq\mcL_u
\end{equation*}
for all ${\gamma_0}\in{\Lambda_0}$.
Moreover for all $u\in D$, the evaluation operator 
\begin{equation*}
\begin{aligned}
E_u:\mcL_u&\to\,\,\mcH\\
f\,\,&\mapsto f(u)
\end{aligned}
\end{equation*}
is a unitary isomorphism with the property that
\begin{equation*}
\pi({\gamma_0})\,E_u=E_u\,\rho(u{\gamma_0}\inv{u})\,,
\end{equation*}
for all ${\gamma_0}\in{\Lambda_0}$ and $u\in D$.  In other words, $E_u$ is an intertwining operator
between $(\pi,\mcH)$ and 
$(\rho|_{u{\Lambda_0}\inv{u}},\mcL_u)$.  Since $(\pi,\mcH)$
is irreducible, $(\rho|_{u{\Lambda_0}\inv{u}},\mcL_u)$ is irreducible as well.

Let now $T:\mcL\to\mcL$ be an intertwining operator for $\rho$.  If $p_u:\mcL\to\mcL_u$
is the orthogonal projection, then, for all $u, v\in D$, $p_v\,T\, p_u$ intertwines
$(\rho|_{(v{\Lambda_0}\inv{v})\cap(u{\Lambda_0}\inv{u})},\mcL_v)$ and 
$(\rho|_{(v{\Lambda_0}\inv{v})\cap(u{\Lambda_0}\inv{u})},\mcL_u)$.
Since $(v{\Lambda_0}\inv{v})\cap(u{\Lambda_0}\inv{u})$ is of finite index both in 
$v{\Lambda_0}\inv{v}$ and in $u{\Lambda_0}\inv{u}$, each of the above representations is finitely reducible, 
\cite{Poguntke}.
Hence the space of intertwining operators between $(\rho|_{(v{\Lambda_0}\inv{v})\cap(u{\Lambda_0}\inv{u})},\mcL_v)$ 
and 
$(\rho|_{(v{\Lambda_0}\inv{v})\cap(u{\Lambda_0}\inv{u})},\mcL_u)$ is finite dimensional, 
which forces the space of intertwining operators of $(\rho,\mcL)$ to be finite dimensional as well.
\end{proof}

\begin{definition}\label{defi:irr}  We say that a representation $\pi$ of $\Lambda$ belongs
to the class ${\mathbf{Mult}_\mathrm{irr}(\Lambda)}$  if there exists a finite index
free subgroup $\Gamma\leq\Lambda$ and a representation
$\pi'$ in the class $\mathbf {Mult}(\Gamma)$ such that
$\pi$ is an irreducible component of $\operatorname{Ind}_{\Gamma}^\Lambda (\pi')$,
\begin{equation*}
\begin{aligned}
{\mathbf{Mult}_\mathrm{irr}(\Lambda)}:=\big\{\pi\in\Lambda\st \exists\, \pi'\in\mathbf{Mult}(\Gamma)
\text{ for some free subgroup }\Gamma\leq\Lambda&\\
\text{ of finite index such that }
\pi\leq \operatorname{Ind}_{\Gamma}^\Lambda (\pi')
\text{ and }\pi\text{ is irreducible}
\big\}&\,.
\end{aligned}
\end{equation*}
\end{definition}

The fact that this class is not empty follows from Proposition~\ref{prop:finite-reducibility} and the fact
that, by Theorem~\ref{thm:decomposition}, any representation in the class $\mathbf{Mult}(\Gamma)$ 
is a finite sum of irreducible representations in the same class.

\begin{corollary}\label{cor:invariance}  For a finitely generated virtually  free group $\Lambda$
the classes ${\mathbf{Mult}(\Lambda)}$ and 
${\mathbf{Mult}_\mathrm{irr}(\Lambda)}$ are $\Aut(\Lambda)$-invariant.
\end{corollary}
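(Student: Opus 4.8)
The plan is to reduce the $\Aut(\Lambda)$-invariance of both classes to the $\Aut(\Gamma)$-invariance of $\mathbf{Mult}(\Gamma)$ for a finite-index free subgroup $\Gamma$, which is built into Theorem~\ref{thm:stab} (the statement that allows us to drop the dependence on the free generating set). Fix $\phi\in\Aut(\Lambda)$ and a representation $\pi\in\mathbf{Mult}(\Lambda)$; by definition there is a finite-index free subgroup $\Gamma\leq\Lambda$ and $\pi'\in\mathbf{Mult}(\Gamma)$ with $\pi\leq\operatorname{Ind}_\Gamma^\Lambda(\pi')$. The first step is the standard observation that induction commutes with automorphisms: since $\phi(\Gamma)$ is again a finite-index free subgroup of $\Lambda$, one has
\begin{equation*}
\pi\circ\phi^{-1}\leq\operatorname{Ind}_\Gamma^\Lambda(\pi')\circ\phi^{-1}\cong\operatorname{Ind}_{\phi(\Gamma)}^\Lambda\big(\pi'\circ(\phi|_\Gamma)^{-1}\big)\comma
\end{equation*}
where $\phi|_\Gamma\colon\Gamma\to\phi(\Gamma)$ is an isomorphism of free groups. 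So it suffices to know that $\pi'\circ(\phi|_\Gamma)^{-1}$ lies in $\mathbf{Mult}(\phi(\Gamma))$.

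The second step is precisely this: $\mathbf{Mult}$ is invariant under isomorphisms of free groups. An isomorphism $\psi\colon\Gamma\to\Gamma''$ of finitely generated free groups carries a symmetric free generating set $A$ of $\Gamma$ to a symmetric free generating set $\psi(A)$ of $\Gamma''$, and transports a matrix system with inner product $(V_a,H_{ba},B_a)_{a\in A}$ to the obviously isomorphic matrix system $(V_{\psi(a)},H_{\psi(b)\psi(a)},B_{\psi(a)})$ on $\Gamma''$; conjugating the unitary $J$ and the dense subspace $M$ in the definition of $\mathbf{Mult}(\Gamma)$ by $\psi$ shows $\pi'\in\mathbf{Mult}(\Gamma)\iff\pi'\circ\psi^{-1}\in\mathbf{Mult}(\Gamma'')$. (This is really a restatement of the content of Theorem~\ref{thm:stab}, which says the class is insensitive to the choice of free generators; combined with the trivial change-of-generators functoriality this gives full isomorphism-invariance of $\mathbf{Mult}$ on the category of finitely generated free groups.) Applying this with $\psi=\phi|_\Gamma$ finishes the argument: $\pi\circ\phi^{-1}\in\mathbf{Mult}(\Lambda)$, and since $\phi$ was arbitrary, $\mathbf{Mult}(\Lambda)$ is $\Aut(\Lambda)$-invariant.

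Finally, the case of $\mathbf{Mult}_\mathrm{irr}(\Lambda)$ is immediate from the same computation, since precomposition with the automorphism $\phi^{-1}$ preserves irreducibility: if $\pi$ is an irreducible component of $\operatorname{Ind}_\Gamma^\Lambda(\pi')$ then $\pi\circ\phi^{-1}$ is an irreducible component of $\operatorname{Ind}_{\phi(\Gamma)}^\Lambda(\pi'\circ(\phi|_\Gamma)^{-1})$, which by the above is an induction from $\mathbf{Mult}(\phi(\Gamma))$. I expect the only slightly delicate point to be the bookkeeping in the isomorphism $\operatorname{Ind}_\Gamma^\Lambda(\rho)\circ\phi^{-1}\cong\operatorname{Ind}_{\phi(\Gamma)}^\Lambda(\rho\circ(\phi|_\Gamma)^{-1})$ — writing down the explicit unitary between the two induced modules — but this is a routine general fact about induced representations and carries no real obstacle; everything else is formal once Theorem~\ref{thm:stab} is invoked.
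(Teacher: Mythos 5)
Your proposal is correct and follows essentially the same route as the paper: the paper likewise combines the isomorphism $\operatorname{Ind}_{\alpha(\Gamma)}^\Lambda(\pi^\alpha)\simeq\operatorname{Ind}_\Gamma^\Lambda(\pi)\circ\alpha$ with the fact (cited from \cite{Iozzi_Kuhn_Steger_stab}) that $\mathbf{Mult}$ of a free group is preserved under isomorphisms, your transport-of-the-matrix-system argument being exactly the justification of that cited fact. The only cosmetic difference is that the paper also invokes Proposition~\ref{prop:independence}, which your argument sidesteps because the definition of $\mathbf{Mult}(\Lambda)$ already quantifies existentially over the finite-index free subgroup.
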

\begin{proof} Let $\alpha\in\Aut(\Lambda)$, let $\Gamma<\Lambda$ be a free subgroup of finite
index and let $\pi\in\mathbf{Mult}(\Gamma)$. 
For $\gamma\in\alpha(\Gamma)$ set $\pi^\alpha(\gamma):=\pi(\inv\alpha\gamma)$.
An easy verification shows that
\begin{equation*}
\operatorname{Ind}_{\alpha(\Gamma)}^\Lambda(\pi^\alpha)
\simeq\operatorname{Ind}_{\Gamma}^\Lambda\big(\pi\big)\circ\alpha\,.
\end{equation*}
The fact that  $\pi^\alpha\in \mathbf{Mult}(\alpha(\Gamma))$ 
(\cite{Iozzi_Kuhn_Steger_stab}) and Proposition~\ref{prop:independence}
show the assertion.
\end{proof}

We may then conclude:
\begin{corollary}\label{cor:bdry-rep-Gamma} The representations of a finitely generated 
virtually free group $\Lambda$ in the class  ${\mathbf{Mult}(\Lambda)}$ 
(and hence ${\mathbf{Mult}_\mathrm{irr}(\Lambda)}$)
are weakly contained in the regular representations.
\end{corollary}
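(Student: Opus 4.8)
The plan is to reduce the assertion for $\Lambda$ to the already-established fact that boundary (in particular multiplicative) representations of a free group are tempered. By definition every $\pi\in\mathbf{Mult}(\Lambda)$ is a subrepresentation of $\operatorname{Ind}_\Gamma^\Lambda(\pi')$ for some free subgroup $\Gamma\leq\Lambda$ of finite index and some $\pi'\in\mathbf{Mult}(\Gamma)$. Since weak containment is inherited by subrepresentations, it is enough to prove that $\operatorname{Ind}_\Gamma^\Lambda(\pi')$ is weakly contained in the regular representation $\lambda_\Lambda$ of $\Lambda$.

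First I would note that, being in the class $\mathbf{Mult}(\Gamma)$, the representation $\pi'$ is unitarily equivalent to a multiplicative representation built from a matrix system with inner product, and we recalled in \S\ref{sec:prelim} that every such representation underlies a boundary representation $(\pi',\alpha_{\pi'},\mcH)$ of the free group $\Gamma$. By Remark~\ref{rem:facts}(1) every boundary representation of $\Gamma$ is weakly contained in $\lambda_\Gamma$; hence $\pi'$ is weakly contained in $\lambda_\Gamma$.

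Next I would invoke two standard facts about weak containment and induction. Induction from a fixed subgroup is continuous for weak containment (Fell), so weak containment of $\pi'$ in $\lambda_\Gamma$ gives weak containment of $\operatorname{Ind}_\Gamma^\Lambda(\pi')$ in $\operatorname{Ind}_\Gamma^\Lambda(\lambda_\Gamma)$; and the induction of the regular representation of a subgroup is the regular representation of the ambient group, $\operatorname{Ind}_\Gamma^\Lambda(\lambda_\Gamma)\cong\lambda_\Lambda$ (for $\Gamma$ of finite index this is elementary). Combining these, $\operatorname{Ind}_\Gamma^\Lambda(\pi')$ is weakly contained in $\lambda_\Lambda$, and therefore so is its subrepresentation $\pi$. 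The statement for $\mathbf{Mult}_\mathrm{irr}(\Lambda)$ follows at once, since that class is contained in $\mathbf{Mult}(\Lambda)$.

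There is no real obstacle here; the only points needing a word of care are (i) that the definition of $\mathbf{Mult}(\Gamma)$ indeed endows $\pi'$ with the structure of a boundary representation, so that Remark~\ref{rem:facts}(1) applies, and (ii) citing the precise form of continuity of induction and of the isomorphism $\operatorname{Ind}_\Gamma^\Lambda(\lambda_\Gamma)\cong\lambda_\Lambda$.
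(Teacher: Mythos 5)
Your proposal is correct and follows essentially the same route as the paper: temperedness of the representations in $\mathbf{Mult}(\Gamma)$ (as boundary representations of the free group, via \cite{K-S1}) combined with continuity of induction, with $\operatorname{Ind}_\Gamma^\Lambda(\lambda_\Gamma)\cong\lambda_\Lambda$ and passage to subrepresentations. You merely make explicit the standard facts about induction and subrepresentations that the paper's brief proof leaves implicit.
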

\begin{proof} Since  representations in the class $\mathbf{Mult}{ }$ of the free group are weakly
contained in the regular representation \cite{K-S1}
the continuity of the induction map ensures that every representation
in the class ${\mathbf{Mult}(\Lambda)}$
is weakly contained in the regular representation of $\Lambda$.
\end{proof}

\section{Tempered Representations of Gromov Hyperbolic Groups }\label{sec:result-hyperbolic}
In this section we prove further properties of the representations in
the class ${\mathbf{Mult}_\mathrm{irr}(\Lambda)}$, namely that they
can be extended to boundary representations (Theorem~\ref{general}).
This will follow from general arguments in operator algebras which
hold for general Gromov hyperbolic groups and do not depend on the
particular construction of the class
${\mathbf{Mult}_\mathrm{irr}(\Lambda)}$, but rather only on the fact
that the representations in the class
${\mathbf{Mult}_\mathrm{irr}(\Lambda)}$ are tempered.  In this section
$G $ is a Gromov hyperbolic group.

\medskip
We saw already that boundary representations are associated with the action of $G $ on its
boundary $\partial G $ and we mentioned that they are in fact 
representations of the  crossed product $G\ltimes \mcC({\partial G})$. 
We recall here the definitions that will be needed for the proof of the next theorem
(and at the same time clarify the above assertions).

\smallskip
Let $\mcA$ be a $C^\ast$-algebra and let us denote by $\mcA[G]$ the space of finitely
supported functions $G\to\mcA$, 
\begin{equation*}
\mcA[G]:=\bigg\{\sum_i \zeta_i\delta_{\gamma_i}:\, \zeta_i\in \mcA, \gamma_i\in G\bigg\}\,,
\end{equation*}
where $\delta_\gamma$ is the Kronecker function at $\gamma\in G $.
If $ G $ acts on $\mcA$ by isometric automorphisms ${\lambda}: G \to\Aut(\mcA)$,
we endow $\mcA[ G ]$ with a $C^\ast$-algebra structure as follows.
Define the sum of two elements of $\mcA[ G ]$ in the obvious way (as
$\mcA$-valued functions on $ G $) and let
\begin{equation}\label{prodotto}
(\zeta_1\delta_{\gamma_1})\cdot(\zeta_2\delta_{\gamma_2}):=
\big(\zeta_1{\lambda}({\gamma_1})\zeta_2\big)\delta_{\gamma_1\gamma_2}\period
\end{equation}
Use the distributive law to extend 
\eqref{prodotto} to a product on $\mcA[ G ]$. 
Finally set 
\begin{equation*}
(\zeta\delta_\gamma)^\ast:= \lambda(\inv\gamma)
  \zeta^*\delta_{\inv\gamma}\period
\end{equation*}

In order to define a norm on $\mcA[ G ]$, take any covariant
representation $(\pi,\alpha,H)$ of $( G ,\mcA)$  and
for $f=\sum_i \zeta_i\delta_{\gamma_i}\in\mcA[ G ]$ 
define the operator 
\begin{equation*}
(\pi\ltimes\alpha)(f):=\sum_i
\alpha(\zeta_i)\pi(\gamma_i)\,.
\end{equation*}
Define now the {\em universal norm}
\begin{equation}\label{norma}
\|f\|:=\sup\|(\pi\ltimes\alpha)(f)\|_H\,,
\end{equation}
where the supremum is taken over all covariant representations $(\pi,\alpha,H)$ of $ G $.
The completion of $\mcA[ G ]$ with respect to the above norm is
the {\em (full) crossed product $C^\ast$-algebra $ G \ltimes\mcA$}.

Given a $C^\ast$-representation $\alpha$ of $\mcA$ on $H$, 
one can always get a covariant representation $(\tilde\lambda,\tilde\alpha)$ of $( G ,\mcA)$ 
on $\ell^2( G )\otimes H$ by setting
\begin{align*}
\big(\tilde\alpha(\zeta)\xi\big)(\gamma)&:=\alpha(\lambda(\inv\gamma)\zeta)\xi(\gamma)\\
\big(\tilde\lambda(\gamma')\xi\big)(\gamma)&:=\xi(\inv{\gamma'}\gamma)\,,
\end{align*}
for all $\zeta\in\mcA$, $\gamma,\gamma'\in G $ and $\xi\in\ell^2( G )\otimes H$.
We remark, for further purposes,  that $\tilde\lambda$ consists of $d$ copies
of the regular representation $\pi_{\text{reg}}$ of $ G $, 
where $d$ is the Hilbert dimension of $H$.
The completion of $\mcA[ G ]$ with respect to the {\em reduced norm} 
\begin{equation*}
\|f\|_{\text{red}}:=\sup_\alpha\|(\tilde\lambda\ltimes\tilde\alpha)(f)\|_{\ell^2( G )\otimes H}\,,
\end{equation*}
where the supremum in \eqref{norma} is taken  only
over those covariant representations of the form
$(\tilde\lambda,\tilde\alpha)$
is the {\em reduced crossed product $C^\ast$-algebra} $ G \ltimes_{\text{red}} \mcA$.

\begin{example} The examples of this construction relevant to our purposes are the following:
\begin{itemize}
\item $\mcA=\mathbf{C}$ is the $C^\ast$-algebra of complex numbers with the trivial $ G $-action;
 in this case $ G \ltimes\mathbf{C}$ is called the {\em group $C^\ast$-algebra}, denoted by $C^\ast( G )$,
 and $ G \ltimes_{\text{red}}\mathbf{C}$ is called the {\em reduced group $C^\ast$-algebra}, 
 denoted by $C^\ast_{\text{red}}( G )$.
 \item  $\mcA=\mcC(\partial G )$ is the $C^\ast$-algebra  of continuous functions on the boundary $\partial G $ of $ G $.
\end{itemize}
\end{example}
\smallskip

We conclude this discussion by exhibiting a universal construction for
representations of the cross product $ G \ltimes\mcC(\partial G )$
\cite[Chapter~X, Theorem~3.8]{T} . Such representations are  also called
{\em cocycle representations} (see for instance the papers of C.~Anantharaman \cite{An2}
and of C.~Anatharaman and J.~Renault \cite{An-R}) and
also appear in the context of  measured semidirect product groupoids (see \cite{Re}).

Let $\om\to H_\om$ be  a Borel field of Hilbert spaces 
and $\mu$ a quasi-invariant measure on ${\partial G }$. Denote by
$\mcH:=\int^\oplus_{\partial G } H_\om d\mu(\om)$ the direct integral and
by $P(\om,\gamma):=\frac{d\mu(\inv\gamma \om)}{d\mu(\om)}$ the Radon--Nikodym
cocycle of the $ G $ action.
For $\om_1$ and $\om_2$ in ${\partial G }$, denote by $\operatorname{Iso}
(H_{\om_1},H_{\om_2})$ the 
space of all isomorphisms from $H_{\om_1}$ to $H_{\om_2}$. 
A unitary Borel cocycle is a map $A(\om,\gamma):\partial G \times G \to
\operatorname{Iso}(H_{\inv \gamma\om}, H_\om)$ such that 
\begin{itemize}
\item $A(\om,\gamma_1\gamma_2)=A(\om,\gamma_1)A(\inv{\gamma_1}\om,\gamma_2)$ [$\mbox{a.e.}\mu$], and
\item the map
$ \om\to\big\langle f(\om),A(\om,\gamma)g(\inv\gamma\om)\big\rangle$
is measurable for every pair of elements $f$, $g\in\mcH$.
\end{itemize}
\begin{definition}
A cocycle representation $\pi$ of $ G $
is a unitary representation acting on $\mcH$ according to the rule
\begin{equation*}
\big(\pi(\gamma)f\big)(\om):=P^{\frac12}(\om,\gamma)A(\om,\gamma)f(\inv\gamma\om)\;.
\end{equation*}
\end{definition}

\medskip
We can now prove the following:

\begin{theorem}\label{general}
Let $ G $ be a torsion free Gromov hyperbolic group which is not almost cyclic. 
Then every tempered representation of $ G $ is a cocycle representation with respect to 
some quasi-invariant measure.  

If the representation is irreducible, the measure 
can be taken to be ergodic and hence 
the dimension of $H_\omega$ is almost everywhere constant.
\end{theorem}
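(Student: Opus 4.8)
The plan is to reduce the statement to a known structure theorem about crossed products with simple reduced $C^\ast$-algebras, exploiting the fact that $\partial G$ is a boundary in the sense of Furstenberg. First I would recall (from \cite{Pow} and its generalizations, e.g.\ via the results on $C^\ast$-simplicity for Gromov hyperbolic groups) that for a torsion-free, non-elementary Gromov hyperbolic group $G$ the reduced $C^\ast$-algebra $C^\ast_{\mathrm{red}}(G)$ is simple; the hypothesis ``not almost cyclic'' is exactly what rules out the elementary case. Next I would observe that since $G$ acts amenably on $\partial G$, the full and reduced crossed products $G\ltimes\mcC(\partial G)$ and $G\ltimes_{\mathrm{red}}\mcC(\partial G)$ coincide, and that $C^\ast_{\mathrm{red}}(G)$ embeds into this crossed product via the inclusion $\CC\hookrightarrow\mcC(\partial G)$. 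The key algebraic input is then: because $\partial G$ is a $G$-boundary and $C^\ast_{\mathrm{red}}(G)$ is simple, every nondegenerate representation of $C^\ast_{\mathrm{red}}(G)$ extends to a representation of $G\ltimes\mcC(\partial G)$; this is the content of the operator-algebraic argument alluded to in the excerpt (it is the same mechanism as in \cite{K-S2} for the free group).

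The second step is to translate ``extends to a representation of $G\ltimes\mcC(\partial G)$'' into ``is a cocycle representation with respect to a quasi-invariant measure.'' Given a tempered representation $(\pi,H)$, it is weakly contained in $\pir$, hence defines a representation of $C^\ast_{\mathrm{red}}(G)$; by the extension result there is a $C^\ast$-representation $\alpha$ of $\mcC(\partial G)$ on $H$ making $(\pi,\alpha,H)$ a covariant pair. Now $\alpha$ restricted to the abelian algebra $\mcC(\partial G)$ gives, by the spectral theorem / disintegration theory for representations of commutative $C^\ast$-algebras, a measure class $[\mu]$ on $\partial G$ and a direct-integral decomposition $H\cong\int^\oplus_{\partial G}H_\om\,d\mu(\om)$ with $\alpha(F)$ acting as multiplication by $F$. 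Transporting $\pi$ through this isomorphism and using the covariance relation $\pi(\gamma)\alpha(F)\pi(\inv\gamma)=\alpha(\lambda(\gamma)F)$, one checks that $\pi(\gamma)$ must carry the fibre $H_{\inv\gamma\om}$ to $H_\om$; writing out the unitarity forces the Radon--Nikodym factor $P^{1/2}(\om,\gamma)$ and the remaining unitary part is exactly a unitary Borel cocycle $A(\om,\gamma)$. This gives precisely the cocycle-representation form of the Definition preceding the theorem, and quasi-invariance of $\mu$ is automatic since the measure class is $G$-invariant.

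For the last assertion, suppose $\pi$ is irreducible. The measure $\mu$ obtained above may not be ergodic, but I would decompose it: write $\partial G$ as a disjoint union (or fibre over) the space of ergodic components of the $G$-action restricted to the measure class $[\mu]$, giving a corresponding direct-integral decomposition of $\pi$ over that base. Since $\pi$ is irreducible, this decomposition must be trivial, i.e.\ $[\mu]$ is already ergodic; concretely, the center of the commutant of $\pi(G)$ contains the $G$-invariant essentially-bounded functions on $\partial G$, and irreducibility forces these to be constant. Once $\mu$ is ergodic, the dimension function $\om\mapsto\dim H_\om$ is $G$-invariant and measurable, hence $\mu$-almost everywhere constant.

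The main obstacle is the extension step: producing the $C^\ast$-representation $\alpha$ of $\mcC(\partial G)$ compatible with $\pi$. For the free group this is \cite{K-S2}, and the proof there uses simplicity of $C^\ast_{\mathrm{red}}(\Gamma)$ together with properties of the boundary action; the work here is to check that the same argument goes through for a general torsion-free non-almost-cyclic Gromov hyperbolic group, i.e.\ that $\partial G$ still plays the role of a ``boundary'' making the inclusion $C^\ast_{\mathrm{red}}(G)\hookrightarrow G\ltimes\mcC(\partial G)$ have the requisite extension property (relative injectivity / the fact that $\mcC(\partial G)$ is a $G$-injective, or at least boundary, $C^\ast$-algebra in the sense of Hamana--Kalantar--Kennedy). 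Everything after that — the disintegration and the ergodic-decomposition argument — is routine von Neumann algebra theory and I would present it briskly.
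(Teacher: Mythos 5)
Your overall strategy coincides with the paper's: use simplicity of $C^\ast_{\text{red}}(G)$ and amenability of the boundary action to get the embedding $C^\ast_{\text{red}}(G)\hookrightarrow G\ltimes\mcC(\partial G)=G\ltimes_{\text{red}}\mcC(\partial G)$, extend the tempered representation across this inclusion, and then disintegrate over the abelian algebra (the paper simply quotes \cite[Ch.~X, Thm.~3.8]{T} for the last step, where you re-derive it; that is fine). But the pivotal extension step is both misattributed and, more seriously, overstated. Misattributed: no Hamana/boundary-injectivity machinery is needed or used; once $\overline\phi$ is injective (the norm computation with $\alpha(\one_{\partial G})=\mathrm{id}$ plus simplicity) and full and reduced crossed products agree, the extension is the completely general Hahn--Banach/GNS fact that a representation of a $C^\ast$-subalgebra extends to the ambient $C^\ast$-algebra (\cite[Lemma~2.10.1]{Dix}), which is exactly what the paper invokes -- so the step you single out as ``the main obstacle'' and leave to be checked is the soft part. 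Overstated, and this is the genuine gap: that extension acts on a Hilbert space $H_{\partial G}\supseteq H$ which may be strictly larger, with $\pi$ only a subrepresentation of the restriction to $G$; it does \emph{not} produce a $C^\ast$-representation $\alpha$ of $\mcC(\partial G)$ on $H$ itself making $(\pi,\alpha,H)$ covariant. By the paper's own monotony/oddity/duplicity discussion (see \cite{K-S2}), there are tempered irreducible representations of free groups -- which are torsion-free, non--almost-cyclic hyperbolic groups -- whose unique boundary realization is not perfect; for such $\pi$ no covariant pair on $H$ exists, so your second step, as written, is false in general.

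This defect propagates into your ergodicity argument: you deduce ergodicity of $\mu$ from the fact that $G$-invariant bounded functions act on $H$ and commute with the irreducible $\pi(G)$, which presupposes precisely the covariant pair on $H$; on the true extension space $H_{\partial G}$ the restriction of $\pi^{\partial G}$ to $G$ need not be irreducible (in the oddity case it splits into two inequivalent pieces), so the commutant argument does not run. The paper instead arranges the extension itself to be irreducible as a representation of $G\ltimes\mcC(\partial G)$ -- extend a pure state of the image of $C^\ast_{\text{red}}(G)$ to a pure state of the crossed product, again by \cite[Lemma~2.10.1]{Dix} -- and then ergodicity of $\mu$, hence the a.e.\ constancy of $\omega\mapsto\dim H_\omega$, follows from irreducibility of $\pi^{\partial G}$, since an invariant set of intermediate measure would yield a nontrivial projection commuting with it. To repair your write-up, replace ``covariant pair on $H$'' throughout by ``representation $\pi^{\partial G}$ of $G\ltimes\mcC(\partial G)$ on some $H_{\partial G}\supseteq H$ whose restriction to $G$ contains $\pi$,'' and carry out the ergodicity step for $\pi^{\partial G}$ rather than for $\pi$.
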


\begin{proof}  The inclusion $\mathbf{C}\hookrightarrow\mcC(\partial G )$
induces a map
$\phi:\mathbf{C}[ G ]\to\mcC({\partial G })[ G ]$ 
defined by 
\begin{equation*}
\phi\bigg(\sum_i\zeta_i\delta_{\gamma_i}\bigg):= 
\sum_i\zeta_i\one_{\partial G }\delta_{\gamma_i}\,,
\end{equation*}
where $\one_{\partial G }\in\mcC(\partial G )$ denotes the function identically one on ${\partial G }$.
It is immediate to verify that $\phi$ is continuous with respect to the reduced norm on both sides: in fact,
since  $\alpha(\one_{\partial G })$ is the identity operator , then
\begin{equation*}
\begin{aligned}
  \bigg\|\phi\bigg(\sum_i\zeta_i\delta_{\gamma_i}\bigg)\bigg\|_{\text{red}}
&=\sup_\alpha\bigg\|(\tilde\lambda\ltimes\tilde\alpha)\bigg(\sum_i\zeta_i\one_{\partial G }\delta_{\gamma_i}\bigg)\bigg\|_{\ell^2( G )\otimes H}\\
&=\bigg\|\tilde\lambda\bigg(\sum_i\zeta_i\delta_{\gamma_i}\bigg)\bigg\|_{\ell^2( G )\otimes H}\\
&=\big\|\pi_{\text{reg}}\big(\sum_i\zeta_i\delta_{\gamma_i}\big)\big\|_{\ell^2( G )}\\
&=\bigg\|\sum_i\zeta_i\delta_{\gamma_i}\bigg\|_{\text{red}}\,.
\end{aligned}
\end{equation*}
Since the reduced $C^\ast$-algebra of $ G $ is simple  \cite{dLH}
(see also \cite{B-C-D} concerning lattices in semisimple Lie groups)
the extension of the above map $\overline\phi$ is actually an inclusion
\begin{equation}\label{map}
\overline\phi:C^\ast_{\text{red}}( G )\hookrightarrow 
 G \ltimes_{\text{red}}\mcC({\partial G })\;.
\end{equation}
Moreover, since the action of $ G $ on $\partial G $ is amenable (see \cite{Adams} or the more recent 
\cite{Ka}), then the reduced crossed product and the full crossed product 
coincide (see \cite[Theorem~5.3]{An1})) and hence we have
\begin{equation}\label{incl}
\overline{\phi}:C^\ast_{\text{red}}( G )\hookrightarrow G \ltimes\mcC({\partial G })\,.
\end{equation}

Assume now that $\pi$ is tempered or,
equivalently, that $\pi$ is a representation of $C^\ast_{\text{red}}( G )$.  
By  standard arguments involving the Hahn--Banach Theorem 
(see \cite[Lemma 2.10.1]{Dix}) one can see that $\pi$ can be extended to a representation
$\pi^{\partial G }$ of  $ G \ltimes\mcC({\partial G })$.  
By \cite[Chapter~X, Theorem~3.8]{T} 
the representations of the full crossed product are exactly 
the cocycle representations for some quasi-invariant measure $\mu$ on $\partial G $ 
and some field of Hilbert spaces $\omega\to H_\omega$.

The same argument in \cite[Lemma 2.10.1]{Dix} shows that  if $\pi$ is irreducible
one can require the extension  $\pi^{\partial G }$ to be also irreducible.
Since  $\pi^{\partial G }$ is irreducible, the corresponding measure $\mu$
is ergodic and, since the map $\omega\mapsto\dim(H_\omega)$ is
 measurable and $ G $-invariant,
the dimension of the Hilbert spaces $H_\om$ is constant $[a.e.\mu]$.
\end{proof}

\begin{remark}
The existence of the map \eqref{map}, and hence of the inclusion \eqref{incl},
is independent of the representation $\pi$ and depends only on the compactness of $\partial G $ and
the amenability of the $ G $-action.  Had we inputted in the picture from the beginning 
a representation $\pi$ that is weakly contained in the regular representation of 
$ G $, we could have obtained directly the map \eqref{incl}.  In fact, since 
cocycle representations are exactly the representations
of the full crossed-product $C^\ast$-algebra  and since the action of $ G $ on 
$\partial G $ is amenable, we have that the restriction of a representation of $ G \ltimes \mcC(\partial G )$
to $ G $ is weakly contained in the regular representation, 
that is $\|\pi(f\ltimes\one_{\partial G })\|\leq\|\pi_{\text{reg}}(f)\|$, 
\cite{Ku}, and hence the continuity of the map
into $ G \ltimes\mcC(\partial G )$ is proved at once. 
\end{remark}

\begin{corollary}\label{cor:bdry}
Let $\Lambda$ be a finitely generated virtually free group and let $\pi$ be a representation 
in the class ${\mathbf{Mult}_\mathrm{irr}(\Lambda)}$.
Then $\pi$ is a cocycle representation with respect to
a quasi-invariant ergodic  measure $\mu$ on $\partial\Lambda$.
\end{corollary}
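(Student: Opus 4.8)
The plan is to deduce Corollary~\ref{cor:bdry} directly from Theorem~\ref{general} by checking that a virtually free group $\Lambda$ carrying a representation in the class $\mathbf{Mult}_\mathrm{irr}(\Lambda)$ satisfies — or can be reduced to the situation of — the hypotheses ``torsion free Gromov hyperbolic, not almost cyclic''. The subtlety is that $\Lambda$ itself need not be torsion free: $\mathrm{PSL}(2,\mathbb Z)$ is the motivating example. So the first step is to pass to a torsion-free finite-index free subgroup. By definition of $\mathbf{Mult}_\mathrm{irr}(\Lambda)$ there is a finite-index free subgroup $\Gamma\leq\Lambda$ and $\pi'\in\mathbf{Mult}(\Gamma)$ with $\pi$ an irreducible component of $\operatorname{Ind}_\Gamma^\Lambda(\pi')$; after replacing $\Gamma$ by a further finite-index free (hence torsion-free) subgroup if necessary and using Theorem~\ref{thm:4.4}(1) together with Proposition~\ref{prop:independence} to stay inside the class, we may assume $\Gamma$ is a torsion-free nonabelian free group.

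Next I would invoke temperedness: by Corollary~\ref{cor:bdry-rep-Gamma} the representation $\pi$ is weakly contained in the regular representation of $\Lambda$, i.e. $\pi$ is a representation of $C^\ast_{\mathrm{red}}(\Lambda)$. The group $\Lambda$ is Gromov hyperbolic (it is virtually free, as noted in the introduction via \cite{Karrass_Pietrowski_Solitar}). The one remaining hypothesis of Theorem~\ref{general} is torsion-freeness of the ambient group, and here I see two routes. The clean route is to observe that the proof of Theorem~\ref{general} only uses torsion-freeness through the simplicity of $C^\ast_{\mathrm{red}}(G)$ (via \cite{dLH}) and the amenability of the boundary action, both of which hold for any non-elementary virtually free group: $C^\ast_{\mathrm{red}}(\Lambda)$ is simple because $\Lambda$ is a non-elementary hyperbolic group with no nontrivial finite normal subgroup — or one can cite that $C^\ast_{\mathrm{red}}(\mathbb Z_2\ast\mathbb Z_3)$ and more generally the relevant virtually free groups have the required property — and $\partial\Lambda=\partial\Gamma$ carries an amenable $\Lambda$-action since it does so for the finite-index subgroup $\Gamma$. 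Thus the argument of Theorem~\ref{general} applies verbatim to $\Lambda$: the inclusion $C^\ast_{\mathrm{red}}(\Lambda)\hookrightarrow\Lambda\ltimes\mcC(\partial\Lambda)$ exists, the full and reduced crossed products coincide by amenability, $\pi$ extends by Hahn--Banach (\cite[Lemma 2.10.1]{Dix}) to an irreducible representation of $\Lambda\ltimes\mcC(\partial\Lambda)$, and by \cite[Chapter X, Theorem 3.8]{T} this is a cocycle representation for an ergodic quasi-invariant measure $\mu$ on $\partial\Lambda$.

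The main obstacle, and the point I would be most careful about, is precisely the simplicity of $C^\ast_{\mathrm{red}}(\Lambda)$ for virtually free — hence possibly torsion-containing — $\Lambda$. If one does not want to re-derive the appropriate generalization, the safe fallback is to stay with the torsion-free free subgroup: apply Theorem~\ref{general} to $\Gamma$ (which genuinely is torsion free, Gromov hyperbolic, not almost cyclic, and on which $\pi|_\Gamma$ is tempered) to express $\pi|_\Gamma$ as a cocycle representation over $\partial\Gamma$, and then push this structure up to $\Lambda$ using $\partial\Lambda\cong\partial\Gamma$ and the finiteness of $\Lambda/\Gamma$: induction from $\Gamma$ to $\Lambda$ of a cocycle representation over $\partial\Gamma$ is again a cocycle representation over $\partial\Lambda$ (the finite fibre contributes to the dimension of the Hilbert bundle and to the cocycle), and irreducibility of $\pi$ forces the resulting measure to be ergodic. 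Either way the conclusion is the same; I would present the first route as the main argument, remarking that it is just Theorem~\ref{general} applied to $\Lambda$ once one notes that non-elementary virtually free groups have simple reduced $C^\ast$-algebra and amenable boundary action, and leave the second route implicit.
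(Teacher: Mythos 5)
Your skeleton is the paper's: the printed proof of this corollary consists precisely of the citations Theorem~\ref{general} and Corollary~\ref{cor:bdry-rep-Gamma} (temperedness of $\mathbf{Mult}(\Lambda)$ plus the extension theorem), applied to $\Lambda$ without further comment. You are right to flag that $\Lambda$ is in general not torsion free, so Theorem~\ref{general} does not apply verbatim --- a point the paper glosses over --- but both of your repairs have problems. In your main route you assert that $\Lambda$ is a non-elementary hyperbolic group ``with no nontrivial finite normal subgroup'' and hence has simple reduced $C^\ast$-algebra. A finitely generated virtually free group can perfectly well have a nontrivial finite normal subgroup (take $\Lambda=\mathbb Z_2\times\Gamma$ with $\Gamma$ free nonabelian), and then $C^\ast_{\text{red}}(\Lambda)$ is \emph{not} simple, so this claim is unjustified and in general false. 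The robust fix is to note that simplicity is not what the proof of Theorem~\ref{general} actually needs: the displayed norm computation (valid for any discrete group acting on a compact space, since $\alpha(\one_{\partial G})$ is the identity) already shows that $\overline\phi$ is isometric, hence injective, and the remaining ingredients --- amenability of the boundary action (inherited from the finite-index subgroup $\Gamma$, or covered by \cite{Adams} in the presence of torsion), the Hahn--Banach extension from \cite[Lemma 2.10.1]{Dix}, and \cite[Chapter~X, Theorem~3.8]{T} --- use neither torsion-freeness nor simplicity. Argued that way, your first route closes the gap; as written, it rests on a hypothesis $\Lambda$ need not satisfy.

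Your fallback route also has a genuine logical gap at its last step. From $\pi\le\operatorname{Ind}_{\Gamma}^{\Lambda}(\pi|_{\Gamma})$ and the fact that the induced representation carries a cocycle (crossed-product) structure over $\partial\Lambda$, you cannot conclude that $\pi$ itself is a cocycle representation: a $\Lambda$-invariant subspace of a representation of $\Lambda\ltimes\mcC(\partial\Lambda)$ need not be invariant under the $\mcC(\partial\Lambda)$-action, so the boundary structure does not automatically compress to the summand carrying $\pi$. Compatibility of induction with the boundary action is exactly the kind of statement that requires work in this paper (compare Theorem~\ref{thm:ind} in the appendix), so this step would need an actual argument rather than the parenthetical remark about the finite fibre.
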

\begin{proof} Theorem~\ref{general} and Corollary~\ref{cor:bdry-rep-Gamma}.
\end{proof}

\begin{remark}
Theorem 4.3 states that every tempered irreducible
representation $(\pi,H)$ of a Gromov hyperbolic group $ G $ admits
at least one extension to an irreducible representation 
$(\pi^{\partial G },H_{\partial G })$ of $ G \ltimes_{\text{red}}\mcC({\partial G })$.
We call such an extension a {\it boundary realization for $\pi$}. 
We say moreover that a boundary realization is {\it perfect} if one can take
$H=H_{\partial G }$.

Even if there  is no {\it a priori} 
reason for $(\pi^{\partial G },H_{\partial G })$ to be unique, 
we have noticed that this is the case when $ G =\Gamma$ is a free group
and $\pi$ is a representation of the class $\mathbf{Mult}(\Gamma)$ whose matrix coefficients
are sufficently ``big'' in the sense of \cite{K-S2}. %Moreover, in many known
%cases, one can also take $\pi^{\partial G }$ acting on the same Hilbert space $H$, even if it
%is an extension of $\pi$.
%When $ G =\Gamma$ is a free group, we have noticed that, for 
In fact for all irreducible tempered representations $(\pi,H)$ of the free group known so far, there are only 
three possibilities:
\begin{itemize}
\item $\pi$ admits only one boundary realization which is perfect.  In this case
the irreducible representation $(\pi^{\partial\Gamma},H)$ of $\Gamma\ltimes\mcC(\partial\Gamma)$
remains irreducible also when restricted to $\Gamma$;
in this case we say that $\pi$ satisfies {\it monotony}.
\item $\pi$ admits only one boundary realization which is not perfect,
so that the inclusion $H\hookrightarrow H_{\partial\Gamma}$ is proper.
In this case the representation $(\pi^{\partial\Gamma},H_{\partial\Gamma})$ is irreducible as representation  
of  $\Gamma\ltimes_{\text{red}}\mcC({\partial\Gamma})$, but, when restricted to 
$\Gamma$, it splits into the sum of two irreducible inequivalent representations;
we say that $\pi$ satisfies {\it oddity}.
\item $\pi$ admits exactly two perfect boundary realizations, no other 
boundary realization is perfect and any other (not perfect) boundary realization can be obtained 
as a linear combination of these two perfect ones;
in this last case we say that $\pi$ satisfies {\it duplicity}.
\end{itemize}
We have conjectured that those are the only three possibilities for
any tempered representation of a free group, 
but we can prove it so far only for representations of the class $\mathbf{Mult}(\Gamma)$, \cite{K-S2}.
We think that the same problem is well posed also for a Gromov hyperbolic group and 
perhaps passing to a more general class of groups will give a better understanding of this  phenomenon.
\end{remark}

As a consequence of Theorem~\ref{general}  we can state an analogue of
Herz majorization principle for a class of hyperbolic groups.

\begin{theorem}\label{prop:herz}
Let  $(\pi,H)$ be a tempered 
representation of  a torsion free Gromov hyperbolic group $ G $  
which is not almost cyclic  
and let $v$ be any vector in $H$. 
Then there exists a positive measure $\mu$ on $\partial G $ such that
\begin{equation}\label{eq:herz}
|\langle\pi(x)v,v\rangle|\leq \norm{v}^2|\langle\rho(x)\one_{\partial G },\one_{\partial G }\rangle|
\end{equation}
where $\rho$ is the quasi-regular representation on $L^2(\partial G ,d\mu)$ and 
$\one_{\partial G }$ is the constant function on $\partial G $.
\end{theorem}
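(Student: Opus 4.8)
The plan is to combine Theorem~\ref{general} with the fibrewise form of the classical Herz argument. First I would invoke Theorem~\ref{general} to realize $\pi$, up to unitary equivalence, as a cocycle representation: there are a quasi-invariant measure $\mu_0$ on $\partial G$, a Borel field $\omega\mapsto H_\omega$ of Hilbert spaces, a unitary Borel cocycle $A(\omega,\gamma)$ and the Radon--Nikodym cocycle $P(\omega,\gamma)=\frac{d\mu_0(\inv\gamma\omega)}{d\mu_0(\omega)}$, with which $\pi$ acts on $\int^\oplus_{\partial G}H_\omega\,d\mu_0(\omega)$ by $\big(\pi(\gamma)f\big)(\omega)=P^{1/2}(\omega,\gamma)A(\omega,\gamma)f(\inv\gamma\omega)$; the vector $v$ then corresponds to a measurable section $\omega\mapsto v(\omega)\in H_\omega$ with $\int_{\partial G}\norm{v(\omega)}_{H_\omega}^2\,d\mu_0(\omega)=\norm{v}^2$, and the matrix coefficient reads
\begin{equation*}
\langle\pi(x)v,v\rangle=\int_{\partial G}P^{1/2}(\omega,x)\,\langle A(\omega,x)v(\inv x\omega),v(\omega)\rangle_{H_\omega}\,d\mu_0(\omega)\,.
\end{equation*}

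Next I would bound the integrand pointwise. Since each $A(\omega,x)$ is a unitary isomorphism $H_{\inv x\omega}\to H_\omega$, the Cauchy--Schwarz inequality in $H_\omega$ gives $|\langle A(\omega,x)v(\inv x\omega),v(\omega)\rangle_{H_\omega}|\le\phi(\inv x\omega)\phi(\omega)$, where $\phi(\omega):=\norm{v(\omega)}_{H_\omega}$ is a nonnegative function in $L^2(\partial G,d\mu_0)$ with $\norm{\phi}_2^2=\norm{v}^2$. Hence
\begin{equation*}
|\langle\pi(x)v,v\rangle|\le\int_{\partial G}P^{1/2}(\omega,x)\,\phi(\inv x\omega)\,\phi(\omega)\,d\mu_0(\omega)\,.
\end{equation*}

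Finally I would recognize the right-hand side as a matrix coefficient of a quasi-regular representation. Set $d\mu(\omega):=\norm{v}^{-2}\phi(\omega)^2\,d\mu_0(\omega)$, a probability measure on $\partial G$; a one-line computation gives its Radon--Nikodym cocycle $P_\mu(\omega,x)=\frac{\phi(\inv x\omega)^2}{\phi(\omega)^2}P(\omega,x)$, so that for the quasi-regular representation $\rho$ on $L^2(\partial G,d\mu)$,
\begin{equation*}
\langle\rho(x)\one_{\partial G},\one_{\partial G}\rangle=\int_{\partial G}P_\mu^{1/2}(\omega,x)\,d\mu(\omega)=\norm{v}^{-2}\int_{\partial G}P^{1/2}(\omega,x)\,\phi(\inv x\omega)\,\phi(\omega)\,d\mu_0(\omega)\,,
\end{equation*}
which is a nonnegative real number and hence equals its own modulus. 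Comparing the last two displays gives exactly \eqref{eq:herz}.

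The only genuinely delicate point is that $\phi^2\mu_0$ need not be quasi-invariant when $v$ vanishes on a set of positive $\mu_0$-measure, so that strictly speaking there is no quasi-regular representation attached to it. This can be repaired cheaply: replacing $\phi^2$ by $\phi^2+\varepsilon$ makes the measure equivalent to $\mu_0$, hence quasi-invariant, at the cost of an extra $\varepsilon$ in the constant; alternatively one averages $\phi^2\mu_0$ over the countable group $G$ to produce a genuinely quasi-invariant measure that dominates it. In the irreducible case nothing needs repairing, since then $\mu_0$ is ergodic (Theorem~\ref{general}) and the $G$-orbit of $\{\phi>0\}$ is conull, so $\phi^2\mu_0\sim\mu_0$ automatically. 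Apart from this, the proof is just the pointwise Cauchy--Schwarz estimate together with the elementary transformation rule for the Radon--Nikodym cocycle under multiplying a measure by $\phi^2$.
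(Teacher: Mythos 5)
Your argument is correct and is essentially the paper's: the paper likewise realizes $\pi$ as a cocycle representation via Theorem~\ref{general}, applies the fibrewise Cauchy--Schwarz estimate, and absorbs $\norm{f(\om)}$ into the measure, handling the vanishing set exactly in your spirit by taking the density equal to $\norm{f(\om)}^2$ on $E(f)$ and $1$ off it (so the new measure stays equivalent to the old one, hence quasi-invariant) and then bounding $\int_{E(f)}P^{1/2}\,d\mu\le\int_{\partial G}P^{1/2}\,d\mu$. One small caveat: your aside that in the irreducible case ergodicity makes $\phi^2\mu_0\sim\mu_0$ automatic is not right (ergodicity forces the \emph{saturation} of $\{\phi>0\}$ to be conull, not the set itself), but this is harmless since your $\varepsilon$-perturbation --- which in fact only increases the right-hand side, so no loss in the constant is incurred --- covers that case as well.
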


\begin{proof}
Let $\pi^{\partial G }$  be a boundary representation extending $\pi$. 
Choose any element $v$  of norm one in $H$ and 
let $\om\mapsto f(\om)$ be the element of $\int_{\partial G }^\oplus H_\om d\mu(\om)$, corresponding to it. 
(We remark that in this case $\mu$ need not be ergodic.)
Let 
\begin{equation*}
E(f)=\{~\om~\in{\partial G }\st f(\om)\neq0\}\,,
\end{equation*} 

\begin{equation*}
F(\om):=\begin{cases}
\norm{f(\om)}\qquad&\text{if $\om\in E(f)$}\\
1\qquad& \text{if }\om\in{\partial G }\setminus E(f)\,,
\end{cases}
\end{equation*}
and set $dm(\om)={[F(\om)]^2d\mu(\om)}$. Since the map
$g(\om) \mapsto \frac{g(\om)}{F(\om)}$
is a unitary  equivalence between 
$\int_{\partial G }^\oplus H_\om d\mu(\om)$ and $\int_{\partial G }^\oplus H_\om dm(\om)$, we may
assume that $\norm{f(\om)}=1$ on $E(f)$. 
Denote by $P(x,\om)$ the Radon--Nikodym derivative of $\mu$ with respect to the 
$ G $ action, so that 
\begin{equation*}
(\pi^{\partial G }(x)f)(\om)=P^{\frac12}(x,\om)A(x,\om)f(\inv x\om)
\end{equation*}
for some unitary Borel cocycle $A(x,\om)$.
One has
\begin{equation*}
\begin{aligned}
|\langle\pi(x)v,v\rangle|=&|\int_{\partial G }\langle A(x,\om)f(\om),f(\om)\rangle_\om
	P^{\frac12}(x,\om)d\mu|\\
&\leq\int_{E(f)}\norm{f(\om)}_\om^2P^{\frac12}(x,\om)d\mu\\
&\leq\int_{\partial G } P^{\frac12}(x,\om)d\mu\\
&=\langle\rho(x)\one_{\partial G },\one_{\partial G }\rangle
\end{aligned}
\end{equation*}
where $\rho$ is the quasi-regular representation on $L^2({\partial G },d\mu)$.
\end{proof}

\begin{remark}\label{rem:4.8}
If $H$ is a semisimple Lie group with finite center and maximal compact $K$,
there exists a unique $K$-invariant probability measure $\mu$  on the 
maximal Furstenberg boundary $H/P$, for $P$ a minimal parabolic.
In this case the quasi-regular representation
$\rho$ on $L^2(H/P,d\mu)$ plays a very important role, namely 
the Harish-Chandra function 
$\Xi(x)=\langle\rho(x)\one_{H/P },\one_{H/P}\rangle$ dominates
all spherical functions associated with tempered unitary representations.
If $H$ has property (T) one can push this further by exhibiting a positive definite
function $\Psi$ which dominates all positive definite non-constant spherical functions on $H$.
R.~Howe and E.~C.~Tan constructed in their book \cite[Chapter V]{HT} 
such a function $\Psi$ from $\Xi$ for $SL(n,\mathbb R)$, for $n\geq3$,
while the more recent paper of H.Oh \cite{Oh} treats the general case.

\medskip 
We remark that the measure $\mu$ of Proposition~\ref{prop:herz}
must depend on $\pi$, making our case much more similar to $SL(2,\mathbb R)$,
for which it is impossible to bound an arbitrary matrix coefficient in terms
of $\Xi$.
To see this, take $\Gamma$ to be a non-abelian free group and take
a copy of $\mathbb Z$ inside $\Gamma$. Let $w$ be the generator for $\mathbb Z$, 
$\pi_{\mathbb Z}$  be the representation induced from the trivial character of $\mathbb Z$
and $\one_{[\mathbb Z]}$ the characteristic function of the coset $[\mathbb Z]$.
If there were to exist a fixed
measure $\mu$ such that \eqref{eq:herz} holds for every tempered $\pi$,
one would have
\begin{equation*}
\langle\rho(w)\one_{\partial\Gamma},\one _{\partial \Gamma}\rangle\geq
\langle\pi_{\mathbb Z}(w)\one_{[\mathbb Z]},\one _{[\mathbb Z]}\rangle
=1\period
\end{equation*}
The fact that every word $w$ generates a copy of $\mathbb Z$ inside $\Gamma$,
would then imply  that 
$\langle\rho(\,\cdot\,)\one_{\partial\Gamma},\one _{\partial \Gamma}\rangle\equiv1$
identically on $\Gamma$, which is impossible since the measure $\mu$ on $\partial\Gamma$ 
cannot be invariant.
\end{remark}

\appendix

\section{Boundaries}\label{sec:boundaries}
Fix   a generator system $S$ for a Gromov hyperbolic group  
$ G $ and denote by $X$ 
its Cayley graph with respect to $S$.
Then $X$ is a hyperbolic geodesic space with respect to the word metric $d_X$. 

Fix, once and for all, a base point $p\in X$.
A sequence of points $\{x_j\in X\}$ is said to {\em tend to infinity} if 
\begin{equation}\label{eq:inf}
\lim_{i,j\to\infty}(x_i|x_j)_p=+\infty\;,
\end{equation}
where $(x|y)_p$ is the Gromov product defined by 
\begin{equation*}
(x|y)_p:=\frac12\big\{d_X(x,p)+d_X(y,p)-d_X(x,y)\big\}\;,
\end{equation*}
for all $x,y,p\in X$.
It can be proved that \eqref{eq:inf} does not depend on the choice 
of the basepoint $p$.
Denote by $S_\infty$ the set of all sequences in $X$ tending to infinity.
Two sequences $\{x_j\}$ and  $\{y_j\}$ in $S_\infty$ are {\em equivalent} if 
\begin{equation*}
\lim_{j\to\infty}(x_j|y_j)_p=+\infty\,.
\end{equation*}
 It can be proved that this is a true
equivalence relation. The {\em boundary at infinity $\partial X$} 
of $X$ is the set
of all equivalence classes of sequences tending to infinity.
When a sequence $\{x_j\}$ represents a class $\om\in\partial X$, we 
say that $x_j$ converges to $\om$. 

An equivalent definition of notion of boundary of a hyperbolic group can be given as follows.
A {\em geodesic ray} is an isometric embedding $r:[0,+\infty)\to X$
of $\mathbf R^+$ into $X$. 
Given a geodesic ray, there exists a unique $r(\infty)\in\partial X$
such that $r(t_j)$ converges to $r(\infty)$ for every sequence of 
real points $\{t_j\}$ going to $+\infty$.

Denote by $R_p$ the set of all  geodesic
rays starting at $p$ ($r(0)=p$). 
Two rays $r$ and $r'$ are equivalent ($r\sim r'$)
if 
\begin{equation*}
d_X\big(r(t),r'(t)\big)\, \text{ is bounded as }t \to\infty\,.
\end{equation*}
The quotient set $R_p/\sim$ is called the
 {\em visual boundary} of $X$ and it can be proved that it does not depend
on the choice of $p$. 
Since $R_p$ has a topology derived from the uniform convergence on compact
intervals of geodesic rays, we endow $R_p/\sim$ with its quotient topology.
Since $ G $ is finitely generated 
(see \cite{delaharpe_ghys}) every closed ball is finite (hence compact!)
and so $X$ is a {\em proper} geodesic space. By  
\cite[Proposition 2.64]{O} the {\em visual boundary} is compact and coincides with the
boundary at infinity defined above, so that we shall denote
by $\partial G$ any of these two boundaries.
The action of $ G $ on $X$ extends in an obvious way to an action on
$\partial G$. %For $\om\in\partial X$, we shall denote by $\inv\gamma \om$ this action.

\section{Proof of Theorem~\ref{thm:4.4}}\label{app:2}
As mentioned in \S~\ref{sec:stability}, the proof is just a straightforward
verification that however uses heavily all the operators and objects defined in 
\cite{Iozzi_Kuhn_Steger_stab}.  
We will hence show here only the following result; the other assertions of Theorem~\ref{thm:4.4} 
are left to the reader.

\begin{theorem}\label{thm:ind} Let $\Gamma_0 \leq \Gamma$ be  a subgroup of finite index 
in the free group $\Gamma$.
If $(\pi_0,\mcH_0)\in\mathbf{Mult}(\Gamma_0)$ is a boundary representation of $\Gamma_0$, then
the induced representation  $\ind{\pi_0}$ is a boundary representation
of $\Gamma$ in the class $\mathbf{Mult}(\Gamma)$.
\end{theorem}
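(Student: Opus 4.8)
The plan is to realize $\rho:=\operatorname{Ind}_{\Gamma_0}^\Gamma\pi_0$ explicitly as a multiplicative boundary representation of $\Gamma$, by building a matrix system with inner product over a suitable free generating set of $\Gamma$ out of the one over $\Gamma_0$ that produces $\pi_0$, and then matching the boundary operators. First I would fix a symmetric free generating set $A$ of $\Gamma$ and a Schreier transversal $D=\{u_1=e,\dots,u_n\}$ for the cosets of $\Gamma_0$, i.e.\ a set of reduced $A$-words, one per coset, closed under taking subwords; the Reidemeister--Schreier procedure then produces a symmetric free generating set $A_0$ of $\Gamma_0$, each of whose members has the form $u\,a\,(\overline{ua})^{-1}$ with $u\in D$, $a\in A$ and $\overline{ua}\in D$ the representative of the coset of $\Gamma_0ua$, read off from the edges of the Schreier graph that close up loops. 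By Theorem~\ref{thm:stab} (whose proof exhibits the old multiplicative representation as the one attached to a sub- or quotient system over the new generators), together with the elementary fact that invariant subsystems and quotient systems of a matrix system with inner product again carry compatible inner products, I may assume $\pi_0$ comes from a matrix system with inner product $(V_c,H_{dc},B_c)_{c\in A_0}$ relative to this particular $A_0$.

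Next comes the unrolling. Realize $\rho$ on $\mcL=\bigoplus_{u\in D}\mcH_0$ through the evaluation isomorphisms $f\mapsto(f(u))_{u\in D}$, with dense subspace $\mcL^\infty$ matching $\bigoplus_{u\in D}\mcH^\infty_0$. To each $a\in A$ I would attach a vector space $W_a$ which collects, over the cosets $u\in D$, a copy of $V_c$ for each $A_0$-letter $c$ whose reduced $A$-spelling ends in $a$ (together with one-dimensional ``passage'' summands for edges of the Schreier graph that merely move between transversal vertices); transition maps $K_{ba}:W_a\to W_b$ built block-wise from the $H_{dc}$ and from the permutation of $D$ induced by right multiplication by $b$; and positive forms $C_a$ assembled diagonally from the $B_c$. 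Using the bijection $D\times\Gamma_0\xrightarrow{\ \sim\ }\Gamma$, $(u,\gamma_0)\mapsto u\gamma_0$, and the eventual multiplicativity of each $f(u)$ over $A_0$, an element of $\mcL^\infty$ reassembles into a single function $\Gamma\to\bigsqcup_{a\in A}W_a$ which is multiplicative for $(W_a,K_{ba})$ in the sense of \eqref{1.1}; conversely every such function arises this way, identifying $\mcL$ with $\mcH(W_a,K_{ba},C_a)$.

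It then remains to verify three things. First, the compatibility condition \eqref{E-cond-B} for $(C_a)_{a\in A}$, which unwinds to the compatibility of $(B_c)_{c\in A_0}$ together with the fact that $D$ meets each coset exactly once. Second, that under the identification $\mcL\cong\mcH(W_a,K_{ba},C_a)$ the $\Gamma$-action $\rho$ is precisely the multiplicative representation $\pi$ of $(W_a,K_{ba},C_a)$; here the cocycle $\pi_0(\gamma_0)^{\pm1}$ appearing in the formula for $\operatorname{Ind}_{\Gamma_0}^\Gamma\pi_0$ is exactly what gets absorbed into the off-diagonal blocks of the $K_{ba}$. Third, that the $\mcC(\partial\Gamma)$-action carried by $\rho$ --- obtained, through the canonical $\Gamma_0$-equivariant homeomorphism $\partial\Gamma_0\cong\partial\Gamma$, by inducing $\alpha_{\pi_0}$ --- becomes, under this identification, pointwise multiplication by the cone indicators $\one_{\Gamma(y)}$ as in \eqref{eq:bdryrep}. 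For this last point one uses that, under $D\times\Gamma_0\xrightarrow{\ \sim\ }\Gamma$, a cone $\Gamma(y)$ of $\Gamma$ is, up to a finite symmetric difference, a finite disjoint union of sets $\{u\}\times\Gamma_0(x)$, so that multiplication by $\one_{\Gamma(y)}$ becomes a finite block combination of the operators $\alpha_{\pi_0}(\one_{\partial\Gamma_0(x)})$ already present in each summand, modulo an operator supported on finitely many words (harmless on $\mcH^\infty_0$).

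The main obstacle is the dictionary underlying the unrolling in the second paragraph: the bookkeeping relating the word geometry of $(\Gamma,A)$ to that of $(\Gamma_0,A_0)$. A reduced $A$-word does not factor naively as a transversal piece followed by a reduced $A_0$-word, so one must control the cancellation between the Schreier prefix and the $A_0$-syllables, determine exactly which $V_c$ lands in which $W_a$, and track how the threshold ``$|x|\ge N$'' in \eqref{1.1} transforms under the re-coordinatization. This is essentially the same combinatorial machinery needed for part~(1) of Theorem~\ref{thm:4.4} and for Theorem~\ref{thm:stab}, which is why I would import it from \cite{Iozzi_Kuhn_Steger_stab} rather than redo it; once that dictionary is in place the three verifications are mechanical, and only the third of them --- the compatibility of the induced boundary structure with the multiplicative one --- is genuinely new, and it is short given $\partial\Gamma=\partial\Gamma_0$ and the $C^\ast$-density of $\{\one_{\partial\Gamma(x)}\}$ in $\mcC(\partial\Gamma)$.
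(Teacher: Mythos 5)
Your proposal is correct and takes essentially the same route as the paper: the identification of $\operatorname{Ind}_{\Gamma_0}^\Gamma\pi_0$ with a multiplicative representation of $\Gamma$ is imported from \cite{Iozzi_Kuhn_Steger_stab} (the paper simply recalls the explicit system $(V_a,H_{ba},B_a)$ and the intertwining operator $J$ rather than rebuilding it via Reidemeister--Schreier), and the genuinely new step --- that the $\mcC(\partial\Gamma)$-action obtained by inducing $\alpha_{\pi_0}$ through $\partial\Gamma_0\cong\partial\Gamma$ becomes multiplication by cone indicators under this identification --- is exactly what the paper verifies, by checking $J\,\Pi(\one_{\partial\Gamma(y)})=\alpha_\pi(\one_{\partial\Gamma(y)})\,J$ on $\mcH^\infty$ via the same ``cones of $\Gamma$ match translated cones of $\Gamma_0$ up to finitely many words'' mechanism you invoke. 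Minor bookkeeping discrepancies in your sketch of the induced system (e.g.\ the one-dimensional ``passage'' summands, which the fundamental-domain construction of \cite{Iozzi_Kuhn_Steger_stab} does not require) are immaterial since you defer to that reference for the dictionary, just as the paper does.
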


We start by recalling some objects that were defined in \cite{Iozzi_Kuhn_Steger_stab} and 
that will be needed in the proof.  

Let $\mcT$ be the Cayley graph of the free group $\Gamma$ with respect to a symmetric set of free
generators $A$, and let $\Gamma_0\leq\Gamma$ be a subgroup. $\mcT$ is a tree in which we fix an origin $e$
and which is a metric space with the word distance with respect to the generating set $A$. It is always possible to choose a fundamental domain $D$
for the action of $\Gamma_0$ on $\mcT$ having the following properties:
\begin{itemize}
\item $D$ is a subtree containing $e$
\item $\Gamma_0$ is generated by the set
\begin{equation*}
A':=\big\{a'_j\in\Gamma_0:\,d(D,a'_jD)=1\big\}\period
\end{equation*}
\end{itemize}  
For any generator $a\in A$,
define the set 
\begin{equation*}
P(a):=(D^{-1}\cdot A')\cap\Gamma(a)\,,
\end{equation*}
where $\inv{D}=\big\{\inv{u}:\,u\in D\big\}$.

If $(V_{a'},H_{b'a'},B_{a'})$ is a matrix system with inner products for $\Gamma_0$ and $\pi_0$
is a representation in the class $\mathbf{Mult}(\Gamma_0)$ acting on the Hilbert space $\mcH_0:=\mcH(V_{a'},H_{b'a'},B_{a'})$,
we consider the induced representation $\ind{\pi_0}$ on $\ind{\mcH_0}$.

Via the assignment $f\mapsto\tilde f$, where $\tilde f(x):=f(u)(h)$ for $x=uh$, 
with $h\in\Gamma_0$ and $u\in D$, we have the identification
\begin{equation*}
\begin{aligned}
\ind{\mcH_0^\infty}&\cong\big\{\tilde f:D\cdot\Gamma_0\to\coprod_{a'\in A'} 
V_{a'}:\,\pi_0(h)\tilde f(g)=\tilde f(g\inv{h}),\\\text{ for all }&h\in\Gamma_0,g\in\Gamma\text{ and }
        \tilde f\text{ is multiplicative as a function of }\Gamma_0\big\}\,.
\end{aligned}
\end{equation*}
In \cite{Iozzi_Kuhn_Steger_stab}, $\big(\ind{\pi_0},\ind{\mcH_0}\big)$
is proved to be equivalent to a multiplicative
representation $(\pi,\mcH)$ on $\mcH:=\mcH(V_a,H_{ba},B_a)$.
The  spaces $V_a$ are indexed on pairs $(u,c')$ corresponding to elements
$u\in D$ and $c'\in A'$ such that $\inv u c'\in P(a)$ while the $H_{ba}$ 
are block matrices that will perform three kinds of operations
on a vector $w_a\in V_a$ with coordinates $w_a=(w_a)_{u,c'}$. 
We give for completeness the explicit expressions for  $V_a$, $H_{ba}$ and 
$B_a$,
but only the $V_a$ will be used in the sequel:
\begin{equation*}
V_a:=\bigoplus_{u\in D}\bigoplus_{\substack{c'\in A'\\ \inv{u}c'\in P(a)}}V_{c'}\,,
\end{equation*}
\begin{equation*}
(H_{ba}w_a)_{{v},d'}:=
\begin{cases}
\begin{aligned}
&(w_a)_{v\inv{a},d'}&\qquad\text{if }v\inv{a}\in D\\
&H_{d'c'}(w_a)_{u,c'} &\hphantom{XX}\text{if } v\inv{a}\notin D \text{ and }a^{-1}v=u^{-1}c'\\
&0&\text{otherwise}\,,
\end{aligned}
\end{cases}
\end{equation*}
and
\begin{equation*}
({B_a})_{{u},c'}:=B_{c'}\qquad\text{where $\inv{u}c'\in P(a)$}\,.
\end{equation*}
%The matrices $H_{ba}$ and the inner products $B_a$ were given here for completeness, 
%but they will not be used in the sequel.
What will be important instead is the explicit form of the intertwining operator
\begin{equation*}
J:\operatorname{Ind}_{\Gamma_0}^\Gamma\big(\mcH(V_{a'},H_{b'a'},B_{a'})\big)
\to\mcH(V_a,H_{ba},B_a)\
\end{equation*}
defined as
\begin{equation}\label{eq:J}
Jf(xa):=\bigoplus_{(u,c')}f(x\inv{u})(c')
\end{equation}
on the dense subspace $\operatorname{Ind}_{\Gamma_0}^\Gamma\big(\mcH^\infty(V_{a'},H_{b'a'},B_{a'})\big)$ of 
$\operatorname{Ind}_{\Gamma_0}^\Gamma\big(\mcH(V_{a'},H_{b'a'},B_{a'})\big)$.

\begin{proof}[Proof of Theorem~\ref{thm:ind}] We assume the result of
  Theorem~\ref{thm:decomposition}, namely the independence of the
  generating set.

Let $\Gamma_0<\Gamma$ be a finite index free subgroup, 
$(\pi_0,\mcH_0)\in\mathbf{Mult}(\Gamma_0)$ a boundary representation of $\Gamma_0$ 
and let us consider as in \eqref{eq:bdryrep} be the associated boundary representation 
$(\pi_0,\alpha_{\pi_0},\mcH_0)$ of $\Gamma_0$.

Let $\lambda$ denote the isometric action both of $\Gamma$ on
$\mcC(\partial\Gamma)$ and of $\Gamma_0$ on
$\mcC(\partial{\Gamma_0})$.  If
$\varphi:\partial\Gamma_0\to\partial\Gamma$ is the boundary
homeomorphism, every function $F\in \mcC(\partial\Gamma)$ defines a
function $F\circ\varphi\in \mcC (\partial\Gamma_0)$,
and hence, if we define (with quite a dose of pedantry...) 
an action $\alpha$ of $\mcC(\partial\Gamma)$ on $\mcH_0$ by
\begin{equation}\label{alpha}
\alpha(F):=\alpha_{\pi_0}(F\circ\varphi)\,,
\end{equation}
it is straightforward to verify that $(\pi_0,\alpha,\mcH_0)$ is a
covariant representation of $\big(\Gamma_0,\mcC(\partial\Gamma)\big)$.

Let us now consider the induced representation $\piind:=\ind{\pi_0}$
on $\Hind:=\ind{\mcH_0}$.
Define an action of $\mcC(\partial\Gamma)$ on $\Hind$ by setting
\begin{equation}\label{eq:C(Om)onHind}
\big(\Pi(F){f}\big)(x):=\alpha\big(\lambda(\inv x)F\big){f}(x)\,,
\end{equation} 
for $f\in\Hind$, $F\in \mcC(\partial\Gamma)$ and $x\in\Gamma$.
We have to check that, under the above assumptions,
$\Pi(F){f}$ is still in $\Hind$, namely
\begin{equation}\label{eq:bdry-repr}
\big(\Pi(F){f}\big)(x \gamma)=\pi_0(\inv \gamma)\big(\Pi(F){f}\big)(x)\,.
\end{equation}
But this is straightforward as, by using \eqref{eq:C(Om)onHind}, 
the covariance of $(\pi_0,\alpha,\mcH_0)$ and \eqref{eq:bdry-repr},
we verify that
\begin{equation*}
\begin{aligned}
 \Pi(F){f}(x \gamma)
=&\alpha\big(\lambda\inv{(x \gamma)}F\big){f}(x \gamma)\\
=&\alpha\big(\lambda(\inv{\gamma})\lambda(\inv{x})F\big){f}(x \gamma)\\
=&\pi_0(\inv{\gamma})\alpha\big(\lambda(\inv{x})F\big)\pi_0(\gamma)f(x \gamma)\\
=&\pi_0(\inv{\gamma})\alpha\big(\lambda(\inv{x})F\big)f(x)\\
=&\pi_0(\inv{\gamma})\big(\Pi(F){f}\big)(x)\,.
\end{aligned}
\end{equation*}
In the same way one proves that
\begin{equation*}
\piind(\gamma)\Pi(F)\piind(\inv \gamma)=\Pi\big(\lambda(\gamma)F\big)\,
\end{equation*}
for all $x\in\Gamma$ and $F\in \mcC(\partial\Gamma)$, thus showing that  
$(\piind,\Pi,\Hind)$ is a boundary representation of $\Gamma$.

What is left to be shown is that if $J$ is the operator that
intertwines $(\piind,\Hind)$ and a multiplicative representation
$(\pi,\mcH)$ of $\Gamma$, then $J$ intertwines also
$\Pi:\mcC(\partial\Gamma)\to\mcL(\Hind)$ and
$\alpha_\pi:\mcC(\partial\Gamma)\to\mcL(\mcH)$ (defined respectively
in \eqref{eq:C(Om)onHind} and \eqref{eq:bdryrep}), namely that
\begin{equation}\label{intertw-cov}
J\,\Pi(F)=\alpha_\pi(F)\, J\,,
\end{equation}
for all $F\in \mcC(\partial\Gamma)$.
It will be indeed enough to verify that for all $f\in\mcH^\infty$ and $F\in \mcC(\partial\Gamma)$
\begin{equation*}
J\big(\Pi(F)f\big)=\alpha_\pi(F)J(f)\,.
\end{equation*}

Using the direct sum decomposition in \eqref{eq:dsd},
we assume first that $f$ is supported on the coset $\Gamma_0$ and that $F=\one_{\partial\Gamma(y)}$ for
some $y\in \Gamma$.
%Let $\tilde f$ 
%be the multiplicative function corresponding to $f(e)$. 
By definition of $J$ in \eqref{eq:J} , we have
\begin{equation}\label{pione}
J\big(\Pi(\one_{\partial\Gamma(y)})f\big)(xa)=\sum_{\inv u c'\in P(a)}\big(\Pi(\one_{\partial\Gamma(y)})f\big)(x\inv u)(c')\;,
\end{equation}
where by \eqref{eq:C(Om)onHind}
\begin{equation*}
\big(\Pi(\one_{\partial\Gamma(y)})f\big)(x\inv u)=\alpha\big(\lambda\inv{(u\inv x)}\one_{\partial\Gamma(y)}\big)f(x\inv u)\,.
\end{equation*}
Since $f$ is supported on $\Gamma_0$ the right hand side of
\eqref{pione} is zero unless
$x\inv u=\gamma\in \Gamma_0$: for these $x$ and $u$, by using the definition of $f$ and 
the covariance property of $(\pi_0,\alpha,\mcH_0)$, we have
 \begin{align*}
 \alpha\big(\lambda\inv{(u\inv x)}\one_{\partial\Gamma(y)}\big)f(x\inv u)
&=\alpha\big(\lambda(\inv \gamma\big)\one_{\partial\Gamma(y)})\pi_0(\inv\gamma)f(e)\\
&=\pi_0(\inv\gamma)\alpha(\one_{\partial\Gamma(y)})f(e)=\alpha(\one_{\partial\Gamma(y)})f(\gamma)\,.
\end{align*}
Substituting the result of these last two computations in \eqref{pione} we obtain
\begin{equation*}
\begin{aligned}
J\big(\Pi(\one_{\partial\Gamma(y)})f\big)(xa)
&=\sum_{\substack{x=\gamma u\\ \inv u c'\in P(a)}}
\big(\alpha(\one_{\partial\Gamma(y)})f(\gamma)\big)(c')\\
&=\sum_{\substack{x=\gamma u\\ \inv u c'\in P(a)}}
\alpha(\one_{\partial\Gamma(y)})\tilde f(\gamma c')=
\sum_{\substack{x=\gamma u\\ \inv u c'\in P(a)\\ \gamma c'\in \Gamma(y)}}\tilde f(\gamma c')\,,
\end{aligned}
\end{equation*}
where in the last equality we used the definition of $\alpha$ in \eqref{alpha} 
(and hence of $\alpha_{\pi_0}$ in \eqref{eq:bdryrep}).
We may assume that $|x|>|y|$, so that  $xa\in \Gamma(y)$
if and only if  $x\inv uc'=\gamma c'\in \Gamma(y)$; hence, by \eqref{eq:bdryrep},
\begin{equation*}
\begin{aligned}
\alpha_{\pi}(\one_{\partial\Gamma(y)})J(f)(xa)
=&\one_{\Gamma(y)}(xa)J(f)(xa)\\
=&\sum_{\substack{x=\gamma u\\ \inv u c'\in P(a)\\ \gamma c'\in \Gamma(y)}}\tilde f(\gamma c')
=J\big(\Pi(\one_{\partial\Gamma(y)}) f\big)(xa)\,,
\end{aligned}
\end{equation*}
which proves \eqref{intertw-cov} for all $f$ supported on $\Gamma_0$.  

Finally, if $f$ is supported on $u\Gamma_0$ for some $u\in D$ then,
applying \eqref{intertw-cov} to $\piind(\inv u)f$ (which is supported on $\Gamma_0$)
and using both the covariance of $(\piind,\Pi,\Hind)$ and of $(\pi,\alpha_\pi,\mcH)$
and the fact that $J$ intertwines $(\piind,\Hind)$ and $(\pi,\mcH)$, 
one can easily verify \eqref{intertw-cov} in general.
% and we have:
%\begin{equation*}
%\begin{aligned}
% J\big(\Pi(\one_{\partial\Gamma(y)})\piind(\inv u)f\big)
%&=\alpha_{\pi}(\one_{\partial\Gamma(y)})J(\piind(\inv u)f)\\\
%&=\alpha_{\pi}(\one_{\partial\Gamma(y)})\alpha_{\pi_0}(\inv u)Jf\\
%&=\pi(\inv u)\alpha_{\pi}\big(\lambda(u)\one_{\partial\Gamma(y)}\big)Jf\\
%&=J(\Pi(\inv u)\Pi\big(\lambda(u)\one_{\partial\Gamma(y)})f\big)\\
%&=\alpha_{\pi}(\inv u)J\big((\lambda(u)\one_{\partial\Gamma(y)})f\big)
%\end{aligned}
%\end{equation*}
%which gives the desired result.
\end{proof}

\def\cprime{$'$}
\providecommand{\bysame}{\leavevmode\hbox to3em{\hrulefill}\thinspace}
\providecommand{\MR}{\relax\ifhmode\unskip\space\fi MR }
% \MRhref is called by the amsart/book/proc definition of \MR.
\providecommand{\MRhref}[2]{%
  \href{http://www.ams.org/mathscinet-getitem?mr=#1}{#2}
}
\providecommand{\href}[2]{#2}

%%\vskip1cm
%\bibliographystyle{amsalpha}
%\bibliography{hyp0nw}
%%\vskip1cm

\end{document}